\newtheorem{theorem}{Theorem}[section]
\newtheorem{proposition}[theorem]{Proposition}
\newtheorem{lemma}[theorem]{Lemma}
\newtheorem{cor}[theorem]{Corollary}
\theoremstyle{definition}
\newtheorem{definition}[theorem]{Definition}
\newtheorem{remark}[theorem]{Remark}
\numberwithin{equation}{section}
\begin{document}
\title[Zeros of Hypergeometric functions]
{Zeros of hypergeometric functions in the $p$-adic setting}


{}
\author{Neelam Saikia}
\address{Department of Mathematics, Indian Institute of Science, Bangalore, INDIA}
\curraddr{}
\email{neelamsaikia@iisc.ac.in/ nlmsaikia1@gmail.com}
\thanks{}


\subjclass[2010]{Primary: 33E50, 33C20, 33C99, 11S80, 11T24.}
\keywords{Character sum; Gauss sums; Jacobi sums; $p$-adic Gamma function, $n$-th power residue modulo $p$.}
\thanks{The author acknowledges the financial support of Department of Science and Technology, Government of India for financial support under INSPIRE Faculty Award.}

\begin{abstract} 
Let $p$ be an odd prime and $\mathbb{F}_p$ be the finite field with $p$ elements. McCarthy \cite{mccarthy-pacific} initiated a study of hypergeometric functions in the $p$-adic setting. This function can be understood as $p$-adic analogue of Gauss' hypergeometric function, and also some kind of extension of Greene's hypergeometric function over $\mathbb{F}_p$. In this paper we investigate values of two generic families of McCarthy's hypergeometric functions denoted by ${_nG_n}(t)$, and ${_n\widetilde{G}_n}(t)$ for $n\geq3$, and $t\in\mathbb{F}_p$. The values of the function ${_nG_n}(t)$ certainly depend on whether $t$ is $n$-th power residue modulo $p$ or not. Similarly, the values of the function ${_n\widetilde{G}_n}(t)$ rely on the incongruent modulo $p$ solutions of $y^n-y^{n-1}+\frac{(n-1)^{n-1}t}{n^n}\equiv0\pmod{p}$.
These results generalize special cases of $p$-adic analogues of Whipple's theorem and Dixon's theorem of classical hypergeometric series. We examine zeros of the functions ${_nG_n}(t)$, and ${_n\widetilde{G}_n}(t)$ over $\mathbb{F}_p$. Moreover, we look into the values of $t$ for which 
${_nG_n}(t)=0$ for infinitely many primes. For example, we show that there are infinitely many primes for which 
${_{2k}G_{2k}}(-1)=0$. In contrast, for $t\neq0$ there is no prime for which ${_{2k}\widetilde{G}_{2k}}(t)=0$.
\end{abstract}
\maketitle
\section{Introduction}
McCarthy \cite{mccarthy-pacific} introduced a function in terms of quotients of $p$-adic gamma functions that can be understood as analogue of classical hypergeometric series in the $p$-adic setting. He developed this function to generalize Greene's `so called' Gaussian hypergeometric functions to wider classes of primes. In this paper we focus in the investigation of the values of certain families of McCarthy's hypergeometric functions in the $p$-adic setting. To be specific, for a positive integer $n\geq3$, we consider two families of McCarthy's hypergeometric functions in the $p$-adic setting with $n$ pairs of arbitrary parameters essentially depend on $n$ to determine all the possible values and explore their number theoretic consequences.
For a complex number $a$ and a non negative integer $k$ the rising factorial denoted by $(a)_k$ is defined by 
$(a)_k:=a(a+1)(a+2)\cdots(a+k-1)$ for $k>0$ and $(a)_0:=1.$ For $a_i,b_i,\lambda\in\mathbb{C}$ with $b_i\not\in\{\ldots,-3,-2,-1,0\},$ the classical hypergeometric series ${_{r+1}F_r}(\lambda)$ is given by
\vspace{-.3cm}
$$
{_{r+1}}F_{r}\left(\begin{array}{cccc}
                   a_1, & a_2, & \ldots, & a_{r+1} \\
                    & b_1, & \ldots, & b_r
                 \end{array}\mid \lambda
\right):=\sum_{k=0}^{\infty}\frac{(a_1)_k\cdots(a_{r+1})_k}
{(b_1)_k\cdots(b_r)_k}\cdot\frac{\lambda^k}{k!}.
$$
In this series if one of the numerator parameters is equal to a non-positive integer, for example let $a_1=-n$, where $n\in\mathbb{N}\cup\{0\}$ then the series terminates and the function is a polynomial of degree $n$ in $\lambda$.
The problem of describing the zeros of the polynomials ${_2F_1}\left(\begin{array}{cc}
         -n, & b \\
         & c
        \end{array}\mid z\right)$ when $b$ and $c$ are complex arbitrary parameters is of particular interest. Indeed it has not been explored completely, and even when $b,c$ are real. 
      The zero location of special classes of such polynomials with restrictions on the parameters $b$ and $c$ can be found in \cite{4, 6, 7, 8, 11, 13} and the asymptotic zero distribution of certain classes have been investigated in \cite{5, 10, 14, 15, 27}. These give motivation to study zeros of $p$-adic analogue of hypergeometric functions from number theoretic point of view. 
         
         Let $\Gamma_p(\cdot)$ denote the Morita's $p$-adic gamma function and $\omega$ denote the Teichm\"{u}ller character of 
$\mathbb{F}_p$ satisfying $\omega(a)\equiv a\pmod{p}$. Let $\overline{\omega}$ denote the character inverse of $\omega$. For $x\in\mathbb{Q}$ let $\lfloor x\rfloor$ denote the greatest integer less than or equal to $x$ and $\langle x\rangle$ denote the fractional part of $x$, satisfying $0\leq\langle x\rangle<1$.
In these notations, we recall McCarthy's definition of hypergeometric function in the $p$-adic setting.
\begin{definition}\cite[Definition 5.1]{mccarthy-pacific} 
Let $p$ be an odd prime and $t \in \mathbb{F}_p$.
For positive integer $n$ and $1\leq k\leq n$, let $a_k$, $b_k$ $\in \mathbb{Q}\cap \mathbb{Z}_p$.
Then 
\begin{align}
&{_n\mathbb{G}_n}\left[\begin{array}{cccc}
             a_1, & a_2, & \ldots, & a_n \\
             b_1, & b_2, & \ldots, & b_n
           \end{array}\mid t
 \right]:=\frac{-1}{p-1}\sum_{a=0}^{p-2}(-1)^{an}~~\overline{\omega}^a(t)\notag\\
&\times \prod\limits_{k=1}^n(-p)^{-\lfloor \langle a_k \rangle-\frac{a}{p-1} \rfloor -\lfloor\langle -b_k \rangle +\frac{a}{p-1}\rfloor}
 \frac{\Gamma_p(\langle a_k-\frac{a}{p-1}\rangle)}{\Gamma_p(\langle a_k \rangle)}
 \frac{\Gamma_p(\langle -b_k+\frac{a}{p-1} \rangle)}{\Gamma_p(\langle -b_k \rangle)}.\notag
\end{align}
\end{definition}
\noindent This function is also known as $p$-adic hypergeometric function. It is important to note that the value of this function depends only on the fractional part of the parameters $a_k$ and $b_k$. Therefore, we may assume that $0\leq a_k,b_k<1$. 
Consider two families of McCarthy's hypergeometric functions in the $p$-adic setting
 \begin{align*}
{_nG_n}(t):&={_n\mathbb{G}_n}\left[\begin{array}{ccccc}
\frac{1}{2n}, & \frac{3}{2n}, & \frac{5}{2n}, & \ldots, & \frac{2n-1}{2n}\vspace{1mm}\\
0, & \frac{1}{n}, & \frac{2}{n}, & \ldots, & \frac{n-1}{n}
\end{array}\mid t\right],
\end{align*}
and
\begin{align*}
{_n\widetilde{G}_n}(t):&={_n\mathbb{G}_n}\left[\begin{array}{ccccc}
 \frac{1}{2}, & \frac{1}{2(n-1)}, & \frac{3}{2(n-1)}, & \ldots, & \frac{2n-3}{2(n-1)} \vspace{1mm}\\
 0, & \frac{1}{n}, & \frac{2}{n}, & \ldots, & \frac{n-1}{n}
 \end{array}\mid t\right].
\end{align*}
In this paper we restrict our attention to these functions. For convenience we denote these functions by ${_nG_n}(t)$, and 
${_n\widetilde{G}_n}(t)$ throughout the paper. Classical hypergeometric series possess many powerful identities. For instance, 
Gauss' theorem provides a special value of a general ${_2F_1}(1)$ hypergeometric series. To be specific, Gauss established that
\begin{align}
{_2F_1}\left(\begin{array}{cc}
           a, & b \\
           & c
         \end{array}\mid1\right)=\frac{\Gamma(c)\Gamma(c-a-b)}{\Gamma(c-a)\Gamma(c-b)},\notag
\end{align}
provided $R(c-a-b)>0$. In the classical case another evaluation of a $_2F_1(-1)$ hypergeometric series is due to Kummer \cite{kummer}. There are other major summation theorems of 
classical hypergeometric series including Dixon's theorem \cite[p. 51]{slater}
\begin{align}\label{Dixon}
{_3F_2}&\left(\begin{array}{ccc}
a, & b, & c\\
~& 1+a-b, & 1+a-c
\end{array}\mid1\right)\notag\\
&=\frac{\Gamma(1+\frac{a}{2})\Gamma(1+a-b)\Gamma(1+a-c)\Gamma(1+\frac{a}{2}-b-c)}
{\Gamma(1+a)\Gamma(1+\frac{a}{2}-b)\Gamma(1+\frac{a}{2}-c)\Gamma(1+a-b-c)},
\end{align}
and Whipple's theorem \cite[p. 54]{lidl}
\begin{align}\label{Whipple}
{_3F_2}&\left(\begin{array}{ccc}
a, & 1-a, & c\\
~& b, & 1-b+2c
\end{array}\mid1\right)\notag\\
&=\frac{\pi\Gamma(b)\Gamma(1-b+2c)}
{\Gamma(\frac{1}{2}(a+b))\Gamma(\frac{1}{2}(1+a-b+2c))\Gamma(\frac{1}{2}(1-a+b))\Gamma(\frac{1}{2}(2-a-b+2c))}.
\end{align}
These identities give motivation to study values of hypergeometric functions in the $p$-adic setting. 
 \par In Section 2 we discuss the values of ${_nG_n}(t)$ over finite fields. By investigating the values we obtain zeros of these functions. In Section 3 we explore the values of  ${_n\widetilde{G}_n}(t)$ and examine their zeros. Most of the results are deduced by simplifying certain character sums and expressing the characters sums as hypergeometric functions in the $p$-adic setting.
Section 4 includes the basic properties of characters, Gauss sums, Jacobi sums, and $p$-adic gamma functions we need. In Section 4 we also state Hasse-Davenport theorem and Gross-Koblitz formula which we use several times. Section 5 is devoted to the proofs of the main results that are stated in Section 2 and Section 3.

\section{Values of ${_nG_n}(t)$}
\noindent Before we discuss the general case, we first mention that the values of ${_2G_2}(t)$ over $\mathbb{F}_p$ have been  obtained in \cite{neelam-pacific}. We now investigate the values of ${_nG_n}(t)$ when $n\geq3$.
\begin{theorem}\label{special-value-1}
Let $n\geq3$ be a positive integer and $p$ be an odd prime such that $p\nmid n$. Let  
$d=\gcd{(n,p-1)}$ and $t\in\mathbb{F}_p$.
\begin{enumerate}
\item If $t^{\frac{p-1}{d}}\equiv1\pmod{p}$, then
${_nG_n}(t)=\displaystyle\sum_{\substack{a\in\mathbb{F}_p\\ a^n\equiv t\pmod{p}}}\varphi(a)\varphi(a-1)$.
\item If $t^{\frac{p-1}{d}}\not\equiv1\pmod{p}$, then ${_nG_n}(t)=0$.
\end{enumerate}
\end{theorem}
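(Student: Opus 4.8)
The plan is to take McCarthy's definition summand by summand, convert the quotients of $p$-adic Gamma functions into Gauss sums by the Gross--Koblitz formula, and then exploit the arithmetic-progression structure of the parameters. The decisive observation is that the numerator parameters form the progression $\frac{2k-1}{2n}=\frac{1}{2n}+\frac{k-1}{n}$ and the denominator parameters the progression $\frac{k-1}{n}$, both of common difference $\frac1n$. For the summand indexed by $a$ the numerator Gamma factors are $\prod_{j=0}^{n-1}\Gamma_p(\langle\frac{1}{2n}+\frac{j}{n}-\frac{a}{p-1}\rangle)$ and, after replacing the set $\{-\frac{j}{n}\}$ by $\{\frac{j}{n}\}$ modulo $1$, the denominator Gamma factors are $\prod_{j=0}^{n-1}\Gamma_p(\langle\frac{a}{p-1}+\frac{j}{n}\rangle)$; each is a product of $n$ equally spaced $\Gamma_p$-values.

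First I would apply the Gauss--Legendre multiplication formula for $\Gamma_p$ --- equivalently the Hasse--Davenport product relation for the associated Gauss sums, passing if necessary to an extension $\mathbb{F}_q$ with $2n\mid q-1$ and descending by the Hasse--Davenport lifting relation --- to collapse each $n$-fold product into a single factor: the numerator product telescopes to $\Gamma_p(\langle\frac12-\frac{na}{p-1}\rangle)$ and the denominator product to $\Gamma_p(\langle\frac{na}{p-1}\rangle)$, each up to explicit factors that are powers of $n$, signs, and powers of $\pi$ (with $\pi^{p-1}=-p$). The upshot is that, modulo these constants, ${_nG_n}(t)$ acquires the shape of the quadratic-parameter function ${_1\mathbb{G}_1}[\frac12;0\mid\cdot]$ but with the summation index $a$ replaced throughout by $na$. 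The sign $(-1)^{an}$ and the floor-function factors $(-p)^{-\lfloor\cdots\rfloor}$ must be carried through this collapse and shown to recombine into the corresponding data of the reduced function.

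The $n$-th-power condition then falls out of orthogonality. Since the collapsed summand depends on $a$ only through $na\bmod(p-1)$, I would insert the identity $\overline{\omega}^a(t)=\frac1d\sum_{c^n\equiv t}\overline{\omega}^{na}(c)$, which holds precisely when $t$ is an $n$-th power residue, i.e. when $t^{(p-1)/d}\equiv1$; when $t$ is not an $n$-th power residue the same orthogonality makes the $a$-sum vanish, which is part (2) (and is consistent with the empty right-hand side there). In the residue case the insertion turns the single sum over $a$ into $\frac1d\sum_{c^n\equiv t}$ of an inner sum in which $a$ enters only as $na\bmod(p-1)$, so that index runs over multiples of $d$ with multiplicity $d$; returning to Gauss sums by Gross--Koblitz and performing a short Jacobi-sum evaluation, this inner sum collapses to $\varphi(c)\varphi(c-1)$ with the factors of $d$ cancelling. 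Summing over the roots $c$ of $a^n\equiv t$ gives $\sum_{c^n\equiv t}\varphi(c)\varphi(c-1)$, as claimed in part (1).

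The main obstacle is the bookkeeping in the telescoping step: the powers of $\pi$, the powers of $n$, and the floor-function factors must be tracked uniformly in $a$ and shown to combine exactly into the constants of the reduced function, with the quadratic character $\varphi$ (and any residual sign) emerging correctly. A second genuine difficulty is doing this for every $p\nmid n$ rather than only for $p\equiv1\pmod{2n}$: the parameters $\frac{2k-1}{2n}$ need not correspond to characters of $\mathbb{F}_p$, so the multiplication step must be justified at the level of $p$-adic Gamma values and fractional parts (or through a carefully controlled extension and descent). I expect this uniform justification, together with the sign and $p$-power accounting, to be where the real work lies.
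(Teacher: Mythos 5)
Your proposal is correct and follows essentially the same route as the paper: the paper introduces the Gauss-sum expression $B_n(t)=\sum_{\chi}g(\varphi\chi^n)g(\overline{\chi}^n)\overline{\chi}((-1)^nt)$ and shows it equals $(p-1)g(\varphi)\,{_nG_n}(t)$ via exactly your Gross--Koblitz plus multiplication-formula collapse (its Proposition \ref{proposition-2}, including the floor-function bookkeeping in Lemma \ref{new-lemma}), then evaluates it by Jacobi-sum expansion and character orthogonality to detect the $n$-th power residue condition (its Proposition \ref{proposition-1}). The only differences are presentational: the paper runs the conversion in the opposite direction and uses full-group orthogonality after expanding the Jacobi sum rather than your subgroup reindexing, and no passage to an extension field is needed since, as you anticipated, the multiplication formula \eqref{prod-1} is applied directly at the level of $p$-adic Gamma values and fractional parts for every $p\nmid n$.
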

By this theorem it follows easily that the function ${_nG_n}(t)$ takes only integer values. 
It is of interest to examine bounds of hypergeometric functions. From Theorem \ref{special-value-1} we obtain an obvious bound of the function ${_nG_n}(t)$. We express this in the following corollary.
\begin{cor}\label{cor-8}
Let $n\geq3$ and $p$ be an odd prime such that $p\nmid n$. If $t\in\mathbb{F}_p$ and $d=\gcd{(n,p-1)}$ then $-d\leq{_nG_n}(t)\leq d$.
\end{cor}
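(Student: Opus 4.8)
The plan is to read the bound directly off Theorem \ref{special-value-1}, treating its two cases separately. In the case $t^{\frac{p-1}{d}}\not\equiv1\pmod{p}$ the theorem gives ${_nG_n}(t)=0$, and since $d=\gcd(n,p-1)\geq1$ the inequality $-d\leq0\leq d$ holds trivially; this case, which in particular covers $t=0$, requires no further work. So the only case to address is $t^{\frac{p-1}{d}}\equiv1\pmod{p}$.

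In that case Theorem \ref{special-value-1} expresses the value as the character sum $\sum_{a^n\equiv t}\varphi(a)\varphi(a-1)$ over $a\in\mathbb{F}_p$, and I would bound it termwise. Since $\varphi$ is the quadratic character, $\varphi(a),\varphi(a-1)\in\{-1,0,1\}$, so each summand $\varphi(a)\varphi(a-1)$ has absolute value at most $1$. Consequently the sum is bounded in absolute value by the number of its terms, that is, by the number of $a\in\mathbb{F}_p$ satisfying $a^n\equiv t\pmod{p}$, and it suffices to show this count is at most $d$.

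The one substantive step is therefore to count the solutions of $a^n\equiv t$. Since $t^{\frac{p-1}{d}}\equiv1$ forces $t\neq0$, I would pass to the cyclic group $\mathbb{F}_p^\ast$, fix a generator $g$, and write $t=g^m$ and $a=g^j$; then $a^n\equiv t$ becomes the linear congruence $nj\equiv m\pmod{p-1}$. The hypothesis $t^{\frac{p-1}{d}}\equiv1$ is precisely the condition $d\mid m$, under which this congruence has exactly $d$ solutions $j$ modulo $p-1$, hence exactly $d$ admissible values of $a$. Combining this count with the termwise bound gives $|{_nG_n}(t)|\leq d$, which is the assertion. I expect no real obstacle here; the only point needing care is the sharp count of $n$-th roots in $\mathbb{F}_p^\ast$, which is the standard description of $n$-th power residues in a finite field and is exactly where the hypothesis $t^{\frac{p-1}{d}}\equiv1$ enters.
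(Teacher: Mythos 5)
Your proof is correct and follows essentially the same route as the paper: the paper's proof is simply to combine Theorem \ref{special-value-1} with Proposition \ref{proposition-root} (the standard count that $y^n\equiv t\pmod{p}$ has exactly $d=\gcd(n,p-1)$ solutions when $t^{\frac{p-1}{d}}\equiv1\pmod{p}$, and none otherwise), together with the termwise bound $|\varphi(a)\varphi(a-1)|\leq1$. The only difference is that you re-derive that root count from scratch via a primitive root, where the paper just cites the proposition.
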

There are two questions that emerge from Theorem \ref{special-value-1}. The first concerns the investigation of zeros of the $p$-adic hypergeometric function ${_nG_n}(t)$ over $\mathbb{F}_p$. If $x\in\mathbb{Q}$ is a zero of ${_nG_n}(t)$ over 
$\mathbb{F}_p$ then the second question is whether $x$ is a zero of  ${_nG_n}(t)$ over $\mathbb{F}_p$ for infinitely many primes $p$. We first discuss a very special case that gives zeros of the function 
${_3G_3}(t)={_3\mathbb{G}_3}\left[\begin{array}{ccc}
\frac{1}{2}, & \frac{1}{6}, & \frac{5}{6}\vspace{1mm}\\
0, & \frac{1}{3}, & \frac{2}{3}
\end{array}\mid t\right]$. The more general case will be discussed later.
\begin{theorem}\label{analogue-1}
Let $p>3$ be a prime and $t\in\mathbb{F}_p^{\times}$.
\begin{enumerate}
\item Let $p\equiv1\pmod{3}$ and $g$ be a primitive root modulo $p$. If $t\neq1$ then 
\begin{align}
{_3G_3}(t)=0\notag
\end{align}
if and only if $t=g^i$ with $\gcd{(i,3)}=1$, and if $t=1$ then ${_3G_3(1)}=0$ if and only if $p\equiv7\pmod{12}$. 
\item Let $p\not\equiv1\pmod{3}$. Then ${_3G_3}(t)=0$ if and only if $t=0,1$. In other words, if $t\neq0,1$ then ${_3G_3}(t)\neq0$. 
\end{enumerate}
\end{theorem}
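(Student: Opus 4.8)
The plan is to read off both parts from Theorem \ref{special-value-1} applied with $n=3$, where $d=\gcd(3,p-1)\in\{1,3\}$, and then to analyze the quadratic-character sum $\sum_{a^3\equiv t}\varphi(a)\varphi(a-1)$ case by case. Part (2) is the easy case: when $p\not\equiv1\pmod3$ we have $d=1$, the cubing map is a bijection of $\mathbb{F}_p^\times$, and every $t\in\mathbb{F}_p^\times$ satisfies $t^{(p-1)/d}=t^{p-1}=1$, so Theorem \ref{special-value-1}(1) reduces to the single term ${_3G_3}(t)=\varphi(a)\varphi(a-1)$, where $a$ is the unique cube root of $t$. This vanishes exactly when $a=0$ or $a=1$, i.e. when $t=0$ or $t=1$, which is precisely (2).

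For (1) assume $p\equiv1\pmod3$, so $d=3$ and $\mathbb{F}_p$ contains a primitive cube root of unity $\zeta$. If $t$ is not a cubic residue---equivalently $t=g^i$ with $\gcd(i,3)=1$---then $t^{(p-1)/3}\not\equiv1\pmod p$ and Theorem \ref{special-value-1}(2) gives ${_3G_3}(t)=0$ at once. If instead $t$ is a cubic residue, its cube roots are $s,\,s\zeta,\,s\zeta^2$. I would first record that $\varphi(\zeta)=1$: since $p$ is odd and $p\equiv1\pmod3$ we have $p\equiv1\pmod6$, hence $(p-1)/2\equiv0\pmod3$ and $\zeta^{(p-1)/2}=1$. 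This allows the factorization ${_3G_3}(t)=\varphi(s)\,[\varphi(s-1)+\varphi(s\zeta-1)+\varphi(s\zeta^2-1)]$.

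The decisive observation for $t\neq1$ is that none of $s-1,\ s\zeta-1,\ s\zeta^2-1$ is zero, since any such equality would force $t=s^3=1$; hence the bracket is a sum of three values in $\{\pm1\}$, and being a sum of three odd integers it is odd, so nonzero, and ${_3G_3}(t)\neq0$. Together with the non-residue case this yields the equivalence claimed for $t\neq1$. For $t=1$ the cube roots are $1,\zeta,\zeta^2$; the term at $a=1$ dies because $\varphi(0)=0$, leaving ${_3G_3}(1)=\varphi(\zeta-1)+\varphi(\zeta^2-1)$. Using $\zeta+\zeta^2=-1$ one computes $(\zeta-1)(\zeta^2-1)=\zeta^3-(\zeta+\zeta^2)+1=3$, so $\varphi(\zeta-1)\varphi(\zeta^2-1)=\varphi(3)$; the two unit terms share a sign exactly when $\varphi(3)=1$, whence ${_3G_3}(1)=0$ iff $\varphi(3)=-1$. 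By the supplementary law for $\left(\tfrac{3}{p}\right)$, among primes with $p\equiv1\pmod3$ this holds precisely when $p\equiv7\pmod{12}$.

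There is no serious obstacle here beyond two points that do the real work: the parity argument that a sum of three $\pm1$'s can never vanish, which disposes of all nontrivial cubic residues simultaneously, and the evaluation $(\zeta-1)(\zeta^2-1)=3$, which collapses the $t=1$ case to a single Legendre symbol. Everything else is bookkeeping with Theorem \ref{special-value-1}, the identification of cubic (non)residues via $g$, and quadratic reciprocity.
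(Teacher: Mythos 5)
Your proposal is correct and follows essentially the same route as the paper: both parts are read off from Theorem \ref{special-value-1} with $n=3$, the vanishing for cubic non-residues coming from part (2) of that theorem and the non-vanishing for cubic residues $t\neq 1$ coming from the parity of a sum of three values in $\{\pm1\}$ (a point the paper asserts without elaboration and you make explicit). The only divergence is the subcase $t=1$ of part (1): the paper pairs the two nontrivial cube roots of unity as $a$, $a^{-1}$ and reduces the vanishing to $1+\varphi(-1)=0$, i.e.\ to $p\equiv 3\pmod 4$, whereas you use $(\zeta-1)(\zeta^2-1)=3$ to reduce it to $\varphi(3)=-1$; since $\varphi(-3)=1$ whenever $p\equiv 1\pmod 3$, the two criteria are equivalent, and both yield $p\equiv 7\pmod{12}$.
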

\begin{remark}
It is important to note that 1 is a zero of the function ${_3G_3}(t)$ for infinitely many primes. If $g\in\mathbb{Z}$ is a primitive root modulo ${p}$ for infinitely many primes of the form $3k+1$ then $g$ is a zero of the function ${_3G_3}(t)$ for infinitely many primes of the form $3k+1$. The existence of such kind of primitive roots certainly related to Artin's conjecture on primitive roots. 
\end{remark}
If we put $a=\frac{1}{2}$, $b=\frac{1}{6}$, and $c=\frac{5}{6}$ in \eqref{Dixon} we obtain
\begin{align}\label{dixon-value}
{_3F_2}&\left(\begin{array}{ccc}
\frac{1}{2}, & \frac{1}{6}, & \frac{5}{6}\vspace{1mm}\\
 & 1+\frac{1}{3}, & \frac{2}{3}
\end{array}\mid1\right)=\frac{\Gamma(1+\frac{1}{4})\Gamma(1+\frac{1}{3})\Gamma(\frac{2}{3})\Gamma(\frac{1}{4})}
{\Gamma(1+\frac{1}{2})\Gamma(1+\frac{1}{12})\Gamma(\frac{5}{12})\Gamma(\frac{1}{2})}.
\end{align}
This is a particular case of Dixon's theorem \eqref{Dixon} and gives motivation to present $p$-adic analogue of 
\eqref{dixon-value} in the following corollary. 
\begin{cor}\label{cor-1}
If $p>3$ is a prime then we have
\begin{align}
{_3G_3}(1)=\left\{\begin{array}{ll}
\pm2, & \hbox{if $p\equiv1\pmod{12}$;}\\
0, & \hbox{if $p\not\equiv1\pmod{12}$.}
\end{array}\right.\notag
\end{align}
\end{cor}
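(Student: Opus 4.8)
The plan is to apply Theorem \ref{special-value-1} directly with $n=3$ and $t=1$. Since $1^{\frac{p-1}{d}}\equiv1\pmod{p}$ holds trivially for every $p$ and every $d$, part (1) of that theorem always applies, giving
$${_3G_3}(1)=\sum_{\substack{a\in\mathbb{F}_p\\ a^3\equiv1\pmod{p}}}\varphi(a)\varphi(a-1).$$
The number of terms here equals $d=\gcd(3,p-1)$, which is $1$ when $p\not\equiv1\pmod{3}$ and $3$ when $p\equiv1\pmod{3}$. So the argument splits naturally along this congruence, and I would track how it interacts with the residue of $p$ modulo $4$ to recover the dichotomy modulo $12$.

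First I would dispose of the case $p\not\equiv1\pmod{3}$ (so $d=1$). The only solution of $a^3\equiv1$ is $a=1$, and the single term is $\varphi(1)\varphi(0)=0$ since $\varphi(0)=0$. Thus ${_3G_3}(1)=0$, consistent with the fact that $p\not\equiv1\pmod{3}$ forces $p\not\equiv1\pmod{12}$. Next, for $p\equiv1\pmod{3}$ the cube roots of unity are $1,\zeta,\zeta^2$ with $\zeta^2+\zeta+1=0$, and the term $a=1$ again vanishes. Two observations reduce the remaining two terms to a single computable quantity. First, since $\zeta$ has odd order $3$ in $\mathbb{F}_p^\times$, it is automatically a square (any element of odd order $m$ equals $(x^{(m+1)/2})^2$), so $\varphi(\zeta)=\varphi(\zeta^2)=1$. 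Second, writing $\zeta+1=-\zeta^2$ from the cyclotomic relation gives $\zeta^2-1=(\zeta-1)(\zeta+1)=-\zeta^2(\zeta-1)$, whence $\varphi(\zeta^2-1)=\varphi(-1)\varphi(\zeta-1)$. Combining these,
$${_3G_3}(1)=\varphi(\zeta-1)+\varphi(\zeta^2-1)=\varphi(\zeta-1)\bigl(1+\varphi(-1)\bigr).$$

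Finally I would evaluate $\varphi(-1)$. If $p\equiv1\pmod{4}$ then $\varphi(-1)=1$, so ${_3G_3}(1)=2\varphi(\zeta-1)=\pm2$ (note $\zeta-1\neq0$); together with $p\equiv1\pmod{3}$ this is exactly the case $p\equiv1\pmod{12}$. If $p\equiv3\pmod{4}$ then $\varphi(-1)=-1$ and the factor $1+\varphi(-1)$ annihilates the sum, so ${_3G_3}(1)=0$; here $p\equiv7\pmod{12}$. Assembling the three cases yields the stated result.

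The only genuinely delicate point is the clean cancellation in the case $p\equiv1\pmod{3}$: one must recognize that $\zeta$ is a quadratic residue and then exploit the relation $\zeta^2+\zeta+1=0$ to factor $\varphi(\zeta^2-1)$ through $\varphi(\zeta-1)$, after which everything collapses to the elementary value of $\varphi(-1)$. I remark that Theorem \ref{analogue-1} already pins down exactly when ${_3G_3}(1)=0$, so that result could shortcut the vanishing cases; however, the exact value $\pm2$ still requires the direct evaluation above, and Corollary \ref{cor-8} by itself only furnishes the bound $|{_3G_3}(1)|\leq3$.
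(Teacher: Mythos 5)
Your proof is correct, and it rests on the same foundation as the paper's (Theorem \ref{special-value-1} at $n=3$, $t=1$, plus the structure of the cube roots of unity), but it is organized differently. The paper's proof of Corollary \ref{cor-1} is two lines: it cites parts (1) and (2) of Theorem \ref{analogue-1} to kill the cases $p\equiv5,7,11\pmod{12}$, and then for $p\equiv1\pmod{12}$ observes that Theorem \ref{special-value-1} exhibits ${_3G_3}(1)$ as a sum of two $\pm1$ terms (the $a=1$ term vanishing), which must equal $\pm2$ since Theorem \ref{analogue-1} guarantees it is nonzero there. What you do instead is inline the computation that the paper had already carried out inside the proof of Theorem \ref{analogue-1}: there, with $a$ a primitive cube root of unity, the sum is reduced to $\varphi(a-1)\left(\varphi(a)+\varphi(-1)\right)$, vanishing exactly when $p\equiv3\pmod{4}$; your identity ${_3G_3}(1)=\varphi(\zeta-1)\left(1+\varphi(-1)\right)$ is the same statement, made slightly cleaner by the observation that $\varphi(\zeta)=1$ because $\zeta$ has odd order. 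So your route is self-contained and yields an explicit closed form, at the cost of repeating work; the paper buys brevity by reusing Theorem \ref{analogue-1}. One small sharpening of your closing remark: the value $\pm2$ does \emph{not} actually require your direct evaluation — once Theorem \ref{analogue-1} gives ${_3G_3}(1)\neq0$ for $p\equiv1\pmod{12}$, a nonzero sum of two $\pm1$ terms is automatically $\pm2$, which is precisely the paper's shortcut.
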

Moreover, if we put $n=3$ in Theorem \ref{special-value-1} then for $t\in\mathbb{F}_p$ we obtain the values of ${_3G_3}(t)$ for all primes $p>3$. To be specific, we obtain
\begin{cor}\label{cor-10}
\begin{enumerate}
\item Let $p\equiv1\pmod{3}$ and $t\neq0,1$. 
If $t^{\frac{p-1}{3}}\equiv1\pmod{p}$ then 
${_3G_3}(t)=\displaystyle\sum_{\substack{a\in\mathbb{F}_p \\a^3\equiv t\pmod{p}}}\varphi(a(a-1)),$ and if $t^{\frac{p-1}{3}}\not\equiv1\pmod{p}$ then ${_3G_3}(t)=0$.
\item Let  $p\equiv2\pmod{3}$ and $t\neq0,1$. Then 
${_3G_3}(t)=\varphi(t^{\frac{2p-1}{3}})\varphi(t^{\frac{2p-1}{3}}-1)$.
\end{enumerate}
\end{cor}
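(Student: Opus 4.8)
The plan is to read both parts directly off Theorem~\ref{special-value-1} specialized to $n=3$, the only genuine work being the computation of $d=\gcd(3,p-1)$ in each residue class of $p$ modulo $3$. First I would treat the case $p\equiv1\pmod{3}$. Here $3\mid p-1$, so $d=\gcd(3,p-1)=3$ and $\frac{p-1}{d}=\frac{p-1}{3}$. Substituting $n=3$ into Theorem~\ref{special-value-1} then reproduces part (1) verbatim: when $t^{(p-1)/3}\equiv1\pmod p$ we obtain ${_3G_3}(t)=\sum_{a^3\equiv t}\varphi(a)\varphi(a-1)$, and otherwise ${_3G_3}(t)=0$. Using that $\varphi$ is the quadratic character and hence multiplicative, I would rewrite $\varphi(a)\varphi(a-1)=\varphi(a(a-1))$ to match the stated form.

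Next, for $p\equiv2\pmod{3}$ I would note that $3\nmid p-1$, so $d=\gcd(3,p-1)=1$ and $\frac{p-1}{d}=p-1$. By Fermat's little theorem $t^{p-1}\equiv1\pmod p$ for every $t\in\mathbb{F}_p^{\times}$, so the hypothesis of part (1) of Theorem~\ref{special-value-1} is automatically satisfied and ${_3G_3}(t)=\sum_{a^3\equiv t}\varphi(a)\varphi(a-1)$. Since $\gcd(3,p-1)=1$, the cubing map is a bijection on $\mathbb{F}_p^{\times}$, so this sum has exactly one term. To identify it explicitly, I would use that $p\equiv2\pmod 3$ forces $3\mid 2p-1$ together with the exponent congruence $3\cdot\frac{2p-1}{3}=2(p-1)+1\equiv1\pmod{p-1}$; hence $a=t^{(2p-1)/3}$ satisfies $a^3\equiv t^{2p-1}\equiv t\pmod p$ and is the unique cube root. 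Substituting this single value gives ${_3G_3}(t)=\varphi(t^{(2p-1)/3})\varphi(t^{(2p-1)/3}-1)$, which is part (2); the hypotheses $t\neq0,1$ guarantee $a\neq0,1$ so that the characters are evaluated away from $0$.

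There is no substantial obstacle here, as the statement is essentially a routine specialization. The only point demanding care is the verification that $t^{(2p-1)/3}$ is the \emph{unique} cube root of $t$, i.e. checking the exponent congruence $3\cdot\frac{2p-1}{3}\equiv1\pmod{p-1}$ and that cubing is injective precisely when $\gcd(3,p-1)=1$. Everything else follows immediately from Theorem~\ref{special-value-1}, Fermat's little theorem, and the multiplicativity of $\varphi$.
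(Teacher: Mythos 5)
Your proposal is correct and follows essentially the same route as the paper: both parts are read off Theorem~\ref{special-value-1} with $n=3$, with part (2) reduced via $\gcd(3,p-1)=1$, Fermat's little theorem, and the identification of $t^{\frac{2p-1}{3}}$ as the unique cube root of $t$ (which the paper asserts and you verify explicitly through the congruence $2p-1\equiv1\pmod{p-1}$). No gaps; your write-up just makes explicit a computation the paper leaves to the reader.
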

This gives an extension of Corollary \ref{cor-1} over $\mathbb{F}_p$ and can be understood as some kind of extension over 
$\mathbb{F}_p$ of the particular case \eqref{dixon-value} of Dixon's theorem in the $p$-adic setting.
\par We now discuss the case for $n>3$. In Theorem \ref{special-value-1} we show that if $t^{\frac{p-1}{d}}\not\equiv1\pmod{p}$ then ${_nG_n}(t)=0$ for all primes 
$p\nmid n$, where $d=\gcd{(n,p-1)}$. More generally, it is interesting to examine the zeros of ${_nG_n}(t)$ over $\mathbb{F}_p$. 
\begin{cor}\label{cor-2}
Let $n\geq3$ be a positive integer and $p$ be an odd prime such that $p\nmid n$. Let $d=\gcd{(n,p-1)}$. 
Let $t^{\frac{p-1}{d}}\equiv1\pmod{p}$. 
\begin{enumerate}
\item Let $n$ be even. If $t\neq1$  and $a_1,a_2,\ldots,a_d$ are the incongruent solutions of $y^n\equiv t\pmod{p}$ modulo $p$ and $\displaystyle\sum_{i=1}^{d}\varphi(a_i(a_i-1))=0$ then ${_nG_n}(t)=0$. On the other hand if $t=1$ then ${_nG_n}(1)\neq0$.
\item Let $n$ be odd. If $t\neq1$ then ${_nG_n}(t)\neq0$. On the other hand if $t=1$ and $a_1,a_2,\ldots,a_{d-1}$ different from $1$ are the incongruent solutions of the congruence $y^n\equiv 1\pmod{p}$ modulo $p$ such that $
\displaystyle\sum_{i=1}^{d}\varphi(a_i(a_i-1))=0$ then ${_nG_n}(1)=0$.
\end{enumerate}
\end{cor}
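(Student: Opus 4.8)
The plan is to reduce the whole statement to Theorem~\ref{special-value-1}(1) combined with an elementary parity count. Under the standing hypothesis $t^{\frac{p-1}{d}}\equiv1\pmod{p}$, part (1) of that theorem identifies the function with a quadratic character sum,
\[
{_nG_n}(t)=\sum_{\substack{a\in\mathbb{F}_p\\ a^n\equiv t\pmod{p}}}\varphi(a(a-1)),
\]
so everything is controlled by this sum. First I would record two elementary facts. The hypothesis $t^{\frac{p-1}{d}}\equiv1$ says precisely that $t$ is an $n$-th power residue, whence the congruence $y^n\equiv t\pmod{p}$ has exactly $d$ incongruent solutions; and since $\varphi$ is the quadratic character, each summand $\varphi(a(a-1))$ lies in $\{-1,0,1\}$ and equals $0$ precisely when $a\equiv0$ or $a\equiv1\pmod{p}$.

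With this, the two implications of the form ``if $\sum_i\varphi(a_i(a_i-1))=0$ then ${_nG_n}(t)=0$''---namely the $t\neq1$ case of (1) and the $t=1$ case of (2)---are immediate, being a direct reading of the displayed formula once the stated hypothesis is imposed. In the $t=1$ subcase of (2) one additionally notes that the root $a\equiv1$ contributes $\varphi(1\cdot0)=0$ and may therefore be omitted, which reconciles the sum over the $d-1$ roots different from $1$ with the full sum.

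The real content is in the two non-vanishing assertions, and both rest on one parity observation. Because $p$ is odd, $p-1$ is even, and so $d=\gcd(n,p-1)$ has the same parity as $n$: if $n$ is odd then $d\mid n$ is odd, whereas if $n$ is even then $2\mid d$. I would then count the nonzero summands. As $t\neq0$, the value $a\equiv0$ never solves $y^n\equiv t$, while $a\equiv1$ solves it exactly when $t=1$. Hence, in (2) with $t\neq1$, none of the $d$ summands vanishes, so the sum consists of $d$ terms each $\pm1$ with $d$ odd; being an odd integer it is nonzero, giving ${_nG_n}(t)\neq0$. In (1) with $t=1$, exactly one summand (from $a\equiv1$) vanishes, leaving $d-1$ terms each $\pm1$ with $d-1$ odd; again the sum is odd and hence nonzero, giving ${_nG_n}(1)\neq0$.

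I expect the only delicate point to be the bookkeeping of which $a$ annihilate a summand, since the argument succeeds only if the number of surviving terms comes out odd. In particular, for even $n$ one must remember that the root $a\equiv-1$ of $y^n\equiv1$ yields the nonzero value $\varphi(2)$ and so is counted among the $d-1$ surviving terms rather than discarded; overlooking such a root would break the parity count on which the entire proof depends.
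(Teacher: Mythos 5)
Your proposal is correct and follows essentially the same route as the paper: invoke Theorem~\ref{special-value-1} to write ${_nG_n}(t)$ as the character sum over the $d$ roots of $y^n\equiv t\pmod{p}$ (Proposition~\ref{proposition-root}), read off the vanishing cases directly from the hypothesis, and establish the non-vanishing cases by the parity argument that an odd number of $\pm1$ summands (after discarding the zero contribution of $a\equiv1$ when $t=1$) cannot sum to zero. Your explicit bookkeeping of which roots annihilate a summand, and the observation that the sum indexed up to $d$ in part (2) silently includes the vanishing term at $a\equiv1$, is in fact slightly more careful than the paper's own write-up.
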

The following corollaries provide particular values of $t\in\mathbb{Q}$ such that  ${_nG_n}(t)=0$ for infinitely many primes.
\begin{cor}\label{cor-6}
Let $n\geq3$ be an odd integer. Then ${_nG_n}(1)=0$ for infinitely many primes $p$ such that $\gcd{(n,p(p-1))}=1$. If 
$t\neq0,1$ then ${_nG_n}(t)=\varphi\left(\frac{a}{a-1}\right)\neq0$ for all primes $p$ such that $\gcd{(n,p(p-1))}=1$, where $a$ is the unique solution of $y^n\equiv t\pmod{p}$ modulo $p$.
\end{cor}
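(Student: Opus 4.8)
The plan is to derive everything from Theorem~\ref{special-value-1} by first unpacking the hypothesis $\gcd(n,p(p-1))=1$. Since $p$ is prime, this is equivalent to $p\nmid n$ together with $d:=\gcd(n,p-1)=1$. With $d=1$ the exponent $\frac{p-1}{d}=p-1$, so by Fermat's little theorem every $t\in\mathbb{F}_p^{\times}$ satisfies $t^{\frac{p-1}{d}}\equiv1\pmod{p}$; hence part~(1) of Theorem~\ref{special-value-1} always applies and
\[
{_nG_n}(t)=\sum_{\substack{a\in\mathbb{F}_p\\ a^n\equiv t\pmod{p}}}\varphi(a)\varphi(a-1).
\]
Moreover, because $\gcd(n,p-1)=1$ the map $a\mapsto a^n$ is a bijection of $\mathbb{F}_p^{\times}$, so for each $t\neq0$ the congruence $y^n\equiv t\pmod{p}$ has a \emph{unique} solution $a$, and the sum above collapses to the single term $\varphi(a)\varphi(a-1)$.

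For the second assertion, suppose $t\neq0,1$ and let $a$ be this unique root. Then $t\neq0$ forces $a\neq0$ and $t\neq1$ forces $a\neq1$, so both $a$ and $a-1$ are nonzero and $\varphi(a)\varphi(a-1)=\pm1\neq0$. It remains to rewrite this as $\varphi\!\left(\frac{a}{a-1}\right)$. Since $\varphi$ is the quadratic character, hence multiplicative with values in $\{\pm1\}$, one has $\varphi(a-1)^{-1}=\varphi(a-1)$, whence $\varphi\!\left(\frac{a}{a-1}\right)=\varphi(a)\varphi(a-1)^{-1}=\varphi(a)\varphi(a-1)$. This yields the stated formula together with its nonvanishing for every prime satisfying $\gcd(n,p(p-1))=1$.

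For the first assertion, take $t=1$. The unique $n$-th root of $1$ in $\mathbb{F}_p^{\times}$ is $a=1$ (the only $n$-th root of unity when $\gcd(n,p-1)=1$), so the collapsed sum equals $\varphi(1)\varphi(1-1)=\varphi(1)\varphi(0)=0$ because $\varphi(0)=0$. Thus ${_nG_n}(1)=0$ for \emph{every} prime with $\gcd(n,p(p-1))=1$, and the claim reduces to exhibiting infinitely many such primes. This is the only substantive step: I would produce infinitely many primes $p$ with $p\not\equiv1\pmod{q}$ for every prime $q\mid n$, since $\gcd(n,p-1)=1$ is precisely the requirement that no prime divisor of $n$ divide $p-1$. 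As $n$ is odd, each such $q$ is an odd prime $\geq3$, leaving at least $q-2\geq1$ admissible residue classes $c_q\not\equiv0,1\pmod{q}$; combining one such class for each $q\mid n$ by the Chinese Remainder Theorem gives a class $c$ modulo $M:=\prod_{q\mid n}q$ with $\gcd(c,M)=1$, and Dirichlet's theorem on primes in arithmetic progressions then furnishes infinitely many primes $p\equiv c\pmod{M}$. Discarding the finitely many $p$ dividing $n$ leaves infinitely many primes with $\gcd(n,p(p-1))=1$. The main (indeed essentially the only) nontrivial ingredient is this routine application of Dirichlet's theorem; the remaining steps are immediate consequences of Theorem~\ref{special-value-1} and the convention $\varphi(0)=0$.
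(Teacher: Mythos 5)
Your proposal is correct, and its core is the same as the paper's argument: both reduce everything to Theorem~\ref{special-value-1}, note that $\gcd(n,p(p-1))=1$ forces the congruence $y^n\equiv t\pmod{p}$ to have a unique solution (the paper cites Proposition~\ref{proposition-root}, you invoke the equivalent fact that $a\mapsto a^n$ is a bijection of $\mathbb{F}_p^{\times}$), and then observe that for $t=1$ the unique root is $a=1$, so the single summand $\varphi(1)\varphi(0)$ vanishes, while for $t\neq0,1$ the single summand $\varphi(a)\varphi(a-1)=\varphi\bigl(\tfrac{a}{a-1}\bigr)$ is $\pm1$. The one place you go beyond the paper is the infinitude claim: the paper's proof shows only that ${_nG_n}(1)=0$ for \emph{every} qualifying prime and tacitly assumes there are infinitely many primes $p$ with $\gcd(n,p(p-1))=1$, whereas you actually prove this, choosing for each odd prime $q\mid n$ a residue class $c_q\not\equiv0,1\pmod{q}$ (possible precisely because $n$ is odd, so $q\geq3$), gluing by the Chinese Remainder Theorem, and applying Dirichlet's theorem. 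That supplementary step is sound and closes the only gap in the paper's own treatment; it also makes visible where the hypothesis that $n$ is odd is genuinely used.
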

\begin{cor}\label{cor-7}
Let $n\geq4$ be an even integer. Then ${_nG_n}(-1)=0$ for infinitely many primes $p$ such that $p\equiv3\pmod{4}$. 
\end{cor}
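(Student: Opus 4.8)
The plan is to deduce Corollary \ref{cor-7} as a direct consequence of the even case of Corollary \ref{cor-2}(1), specialized to $t=-1$, together with a counting argument that the hypothesis $\sum_{i=1}^{d}\varphi(a_i(a_i-1))=0$ holds for a positive-density family of primes. First I would restrict to primes $p\equiv 3\pmod 4$ and verify that the side condition $t^{(p-1)/d}\equiv 1\pmod p$ needed to invoke Theorem \ref{special-value-1}(1) is satisfied for $t=-1$. Since $n$ is even, $d=\gcd(n,p-1)$ is even, so $\frac{p-1}{d}$ divides $\frac{p-1}{2}$; as $-1$ is a quadratic residue precisely when $p\equiv1\pmod4$, for $p\equiv3\pmod4$ we have $(-1)^{(p-1)/2}\equiv-1$, but because $d$ is even the exponent $\frac{p-1}{d}$ is even and hence $(-1)^{(p-1)/d}\equiv 1\pmod p$. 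This confirms $-1$ lies in the image of the $n$-th power map and that $y^n\equiv -1$ has exactly $d$ solutions $a_1,\dots,a_d$ modulo $p$.

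The core of the argument is then to evaluate the character sum $S:=\sum_{i=1}^{d}\varphi(a_i(a_i-1))$ attached to these solutions and show it vanishes for infinitely many such primes. The key observation is a pairing symmetry: if $a$ is a solution of $y^n\equiv-1$, then since $n$ is even so is $-a$, and $(-a)^n=a^n\equiv-1$, so $-a$ is again a solution. I would pair each solution $a$ with $-a$ and compare the contributions $\varphi(a(a-1))$ and $\varphi((-a)(-a-1))=\varphi(a)\varphi(a+1)$. Because $p\equiv3\pmod4$ gives $\varphi(-1)=-1$, one finds $\varphi((-a)(-a-1))=\varphi(-1)^2\varphi(a(a+1))=\varphi(a(a+1))$, so the pairing relates $\varphi(a-1)$ to $\varphi(a+1)$ after factoring off $\varphi(a)$. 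The plan is to show these paired terms cancel, forcing $S=0$; alternatively, I would rewrite $S=\sum_{a^n\equiv-1}\varphi(a)\varphi(a-1)$ as a full character sum over $\mathbb{F}_p$ using the indicator $\frac{1}{d}\sum_{\chi^{d/\gcd}=\varepsilon}\chi(a^n/(-1))$ and reduce it to a combination of Jacobi-type sums whose values are governed by congruence conditions on $p$.

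To upgrade "$S=0$ for the congruence class $p\equiv3\pmod4$" to "infinitely many primes," I would invoke Dirichlet's theorem on primes in arithmetic progressions: the residue class $p\equiv 3\pmod 4$ (intersected with $p\nmid n$, a finite exclusion) contains infinitely many primes, and for every such prime the cancellation argument gives $S=0$, whence Corollary \ref{cor-2}(1) yields ${_nG_n}(-1)=0$. One must also confirm $t=-1\neq1$, which holds since $p$ is odd, so the hypothesis $t\neq1$ of Corollary \ref{cor-2}(1) is met.

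The main obstacle I anticipate is establishing the vanishing of $S$ rigorously rather than heuristically. The naive $a\leftrightarrow -a$ pairing controls the $\varphi(a-1)$ versus $\varphi(a+1)$ discrepancy but does not by itself force termwise cancellation, since $\varphi(a-1)$ and $\varphi(a+1)$ need not be opposite in sign. The delicate point is therefore to express $S$ exactly as a character sum and identify it with a Jacobi sum (or a small linear combination thereof) that provably vanishes under $p\equiv3\pmod4$; controlling which multiplicative characters of order dividing $d$ survive, and verifying the relevant Jacobi sums vanish by a Gauss-sum or Gross--Koblitz computation, is where the real work lies. I would hedge by selecting, if necessary, a finer progression modulo $4n$ (still infinite by Dirichlet) on which the surviving characters are forced to pair into conjugate terms summing to zero.
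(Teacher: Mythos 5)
Your proof has a genuine and fatal error at the very first step: the parity computation of the exponent $\frac{p-1}{d}$ is backwards. For $p\equiv 3\pmod 4$ we have $p-1=2m$ with $m$ odd, and since $n$ is even, $d=\gcd(n,p-1)=2k$ with $k$ odd; hence $\frac{p-1}{d}=\frac{m}{k}$ is \emph{odd}, not even. Therefore $(-1)^{(p-1)/d}\equiv -1\not\equiv 1\pmod p$, i.e.\ $-1$ is \emph{not} an $n$-th power residue modulo $p$. (You could have caught this with a sanity check you yourself half-state: if $-1$ were an $n$-th power with $n$ even, it would in particular be a square, contradicting $p\equiv 3\pmod 4$; or concretely, $p=7$, $n=4$ gives $d=2$, $(p-1)/d=3$, and the fourth powers mod $7$ are $\{1,2,4\}$, which exclude $-1$.) Consequently the hypothesis of Theorem \ref{special-value-1}(1) fails, the congruence $y^n\equiv -1\pmod p$ has no solutions, your sum $S$ is empty, and the entire core of your argument --- the $a\leftrightarrow -a$ pairing and the proposed Jacobi-sum cancellation, which you in any case acknowledge you have not made rigorous --- addresses a vacuous situation.

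The irony is that the true statement is much easier than what you attempted: precisely because $(-1)^{(p-1)/d}\not\equiv 1\pmod p$, part (2) of Theorem \ref{special-value-1} applies directly and gives ${_nG_n}(-1)=0$ with no character-sum evaluation at all. This is exactly the paper's proof: for every prime $p\equiv 3\pmod 4$ with $p\nmid n$ (a finite exclusion), $-1$ fails the $n$-th power residue test, so ${_nG_n}(-1)=0$, and the infinitude of such primes (Dirichlet, or the elementary argument for the class $3\bmod 4$) finishes the corollary. Your closing "hedge" about passing to a finer progression modulo $4n$ would not rescue the argument either, since the non-residuosity of $-1$ holds for \emph{all} primes $p\equiv 3\pmod 4$, so there is no subfamily on which your intended case (1) of Corollary \ref{cor-2} becomes applicable.
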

If we consider the family $\{{_{p-1}G_{p-1}(t)}:p\geq3\}$ then it turns out that for $t\neq1$ this family takes only one value. We state these in the following corollary explicitly.
\begin{cor}\label{cor-3}
If $t\neq1$ then ${_{p-1}G_{p-1}}(t)=0$, and if $t=1$ then 
${_{p-1}G_{p-1}}(1)=-1$ for all primes $p\geq3$.
\end{cor}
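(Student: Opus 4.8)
The plan is to specialize Theorem \ref{special-value-1} to the case $n = p-1$ and read off the two regimes. First I would verify the hypotheses: for $p \geq 5$ we have $n = p-1 \geq 4 \geq 3$, and $p \nmid (p-1)$ holds trivially, so Theorem \ref{special-value-1} applies. The key arithmetic simplification is that $d = \gcd(p-1,\, p-1) = p-1$, so $\frac{p-1}{d} = 1$. Consequently the dichotomy governing Theorem \ref{special-value-1} collapses: the condition $t^{(p-1)/d} \equiv 1 \pmod{p}$ is simply $t \equiv 1 \pmod{p}$.

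Next I would split on the value of $t$. If $t \neq 1$ (this includes $t = 0$, since $0 \not\equiv 1 \pmod p$), then $t^{(p-1)/d} = t \not\equiv 1 \pmod{p}$, so part (2) of Theorem \ref{special-value-1} immediately gives ${_{p-1}G_{p-1}}(t) = 0$. If $t = 1$, then part (1) applies and yields ${_{p-1}G_{p-1}}(1) = \sum_{a^{p-1} \equiv 1 \pmod p} \varphi(a)\varphi(a-1)$. The crucial observation here is that, by Fermat's little theorem, $a^{p-1} \equiv 1 \pmod{p}$ holds for \emph{every} $a \in \mathbb{F}_p^{\times}$; hence the sum runs over all nonzero $a$, and since the $a = 1$ term contributes $\varphi(0) = 0$ we may equally well sum over all of $\mathbb{F}_p$.

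The one genuine computation is then the quadratic character sum $S := \sum_{a \in \mathbb{F}_p} \varphi(a(a-1))$. I would evaluate it by the standard substitution: for $a \neq 0$ one has $\varphi(a(a-1)) = \varphi(a^2)\varphi(1 - a^{-1}) = \varphi(1 - a^{-1})$, and letting $b = a^{-1}$ range over $\mathbb{F}_p^{\times}$ transforms $S$ into $\sum_{b \in \mathbb{F}_p^{\times}} \varphi(1-b) = \left( \sum_{c \in \mathbb{F}_p} \varphi(c) \right) - \varphi(1) = 0 - 1 = -1$. Equivalently, one may invoke the classical evaluation $\sum_x \varphi(x^2 + bx + c) = -1$ whenever the discriminant $b^2 - 4c \neq 0$, which here equals $1$. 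This gives ${_{p-1}G_{p-1}}(1) = -1$, completing both cases.

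I do not anticipate a serious obstacle, as the result is essentially a clean specialization of Theorem \ref{special-value-1} together with one textbook character-sum evaluation. The only point requiring separate care is the boundary prime $p = 3$, where $n = p - 1 = 2$ falls outside the range $n \geq 3$ demanded by Theorem \ref{special-value-1}. For this single prime I would fall back on the evaluation of ${_2G_2}(t)$ over $\mathbb{F}_p$ from \cite{neelam-pacific}, whose statement has the same shape with $d = \gcd(2, p-1)$, or simply verify the two asserted values ${_2G_2}(1) = -1$ and ${_2G_2}(t) = 0$ for $t \neq 1$ directly over $\mathbb{F}_3$.
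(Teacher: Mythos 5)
Your proposal is correct and follows the same overall route as the paper: specialize Theorem \ref{special-value-1} to $n=p-1$, observe that $d=\gcd(p-1,p-1)=p-1$ so the dichotomy collapses to whether $t\equiv 1\pmod{p}$, and then evaluate the $t=1$ sum using the fact that every $a\in\mathbb{F}_p^{\times}$ satisfies $a^{p-1}\equiv1\pmod{p}$. You differ in two points. First, for the sum $\sum_{a\in\mathbb{F}_p^{\times}}\varphi(a)\varphi(a-1)$ the paper rewrites it as $\varphi(-1)J(\varphi,\varphi)$ and invokes the binomial identity \eqref{rel-1} to get $-1$, whereas you compute it by the elementary substitution $b=a^{-1}$, reducing it to $\sum_{c\neq1}\varphi(c)=-1$; both are one-line evaluations, but yours needs no Jacobi-sum machinery. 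Second, and more substantively, you flag the boundary prime $p=3$: there $n=p-1=2$ falls outside the hypothesis $n\geq3$ of Theorem \ref{special-value-1}, so that theorem cannot be applied as stated. The paper's proof applies the theorem uniformly for all $p\geq3$ and silently ignores this case, so your separate treatment of $p=3$ --- via the ${_2G_2}$ evaluation from \cite{neelam-pacific}, or a direct finite check over $\mathbb{F}_3$ (which indeed gives ${_2G_2}(1)=\varphi(2)\varphi(1)=-1$ and ${_2G_2}(2)=0$) --- closes a genuine, if small, gap in the published argument.
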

\section{Values of ${_n\widetilde{G}_n}(t)$}
In this section we explore the values of the function ${_n\widetilde{G}_n}(t)$. We express these values in terms of roots of certain polynomial over $\mathbb{F}_p$. 
\begin{theorem}\label{special-value-2}
 Let $n\geq3$ be an integer and $p$ be an odd prime such that $p\nmid n(n-1)$. For $t\in\mathbb{F}_p^\times$ let 
$f_t(y)=y^n-y^{n-1}+\frac{(n-1)^{n-1}t}{n^n}$ be a polynomial over $\mathbb{F}_p$, and $$\beta_n(t)=\left\{\begin{array}{ll}
1, &\hbox{if $n$ is odd;}\\
1-(p-1)\varphi((1-n)t), & \hbox{if $n$ is even.}
\end{array}
\right.$$

\noindent Then we have 
\begin{align}
&{_n\widetilde{G}_n}(t)
 =\frac{\beta_n(t)-1}{p}+\sum_{f_t(a)\equiv0\pmod{p}}\varphi(a(a-1)).\notag
\end{align}
\end{theorem}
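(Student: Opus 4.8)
The plan is to convert ${_n\widetilde{G}_n}(t)$ into a finite field character sum and then identify that sum with a $\varphi$-weighted count of the roots of $f_t$, exactly paralleling the strategy behind Theorem \ref{special-value-1}. The guiding observation is that $f_t(a)\equiv0$ is equivalent to $\frac{n^n}{(n-1)^{n-1}}a^{n-1}(1-a)\equiv t$, since $f_t(y)=-y^{n-1}(1-y)+\frac{(n-1)^{n-1}t}{n^n}$. Thus the root set of $f_t$ plays the role here that the solution set of $a^n\equiv t$ played in the case of ${_nG_n}(t)$, and one expects the argument $t$ to enter the collapsed character sum precisely through $\chi\bigl(\tfrac{n^n}{(n-1)^{n-1}}t\bigr)$.

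First I would apply the Gross--Koblitz formula from Section 4 to the defining expression, rewriting each quotient $\Gamma_p(\langle a_k-\tfrac{a}{p-1}\rangle)/\Gamma_p(\langle a_k\rangle)$ and $\Gamma_p(\langle -b_k+\tfrac{a}{p-1}\rangle)/\Gamma_p(\langle -b_k\rangle)$ as a ratio of Gauss sums, and absorbing the $(-p)^{-\lfloor\cdot\rfloor}$ factors. Setting $\chi=\overline{\omega}^{a}$, the sum over $0\le a\le p-2$ becomes a sum over all multiplicative characters $\chi$ of $\mathbb{F}_p^\times$, so that ${_n\widetilde{G}_n}(t)=\frac{-1}{p-1}\sum_{\chi}\chi(t)\,R(\chi)$ for an explicit ratio $R(\chi)$ of products of Gauss sums attached to the numerator parameters $\{\tfrac12\}\cup\{\tfrac{2k-1}{2(n-1)}\}$ and the denominator parameters $\{\tfrac{k}{n}\}$. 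The heart of the argument is then the Hasse--Davenport product relation. The denominator parameters $\tfrac{k}{n}$, $k=0,\dots,n-1$, form the full set of multiples of $\tfrac1n$ and collapse under the order-$n$ multiplication formula into a single Gauss sum in $\chi^{n}$ with a constant carrying $\chi^{-n}(n)$. The numerator parameters $\tfrac{2k-1}{2(n-1)}$, $k=1,\dots,n-1$, are the odd residues modulo $2(n-1)$; writing $\psi$ for a character of order $2(n-1)$, they constitute the coset $\{(\chi\psi)(\psi^{2})^{j}\}$, so the order-$(n-1)$ multiplication formula collapses them into $g(\chi^{n-1}\psi^{n-1})=g(\chi^{n-1}\varphi)$, using that $\psi^{n-1}$ is the unique character of order $2$, namely $\varphi$, together with a constant carrying $\chi^{-(n-1)}(n-1)$. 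The combined constants $\chi^{-n}(n)$ and $\chi^{n-1}(n-1)$ are exactly what reshape the argument into $\chi\bigl(\tfrac{n^n}{(n-1)^{n-1}}t\bigr)$, after which $g(\chi)g(\overline{\chi})=\chi(-1)p$ and standard Jacobi-sum manipulations turn the surviving sum into $\sum_{x}\varphi(x)\varphi(1-x)$ restricted to those $x$ with $\frac{n^n}{(n-1)^{n-1}}x^{n-1}(1-x)\equiv t$, i.e. into $\sum_{f_t(a)\equiv0}\varphi(a(a-1))$.

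The main obstacle, and the origin of both the correction term $\frac{\beta_n(t)-1}{p}$ and the parity split in $\beta_n(t)$, is the careful handling of the extra parameter $\tfrac12$ and of the degenerate characters in the collapse. The decisive structural point is that $\tfrac12=\tfrac{n-1}{2(n-1)}$ lies among the odd residues $\tfrac{2k-1}{2(n-1)}$ precisely when $n-1$ is odd, that is when $n$ is even; in that case $\tfrac12$ is effectively doubled and contributes an additional quadratic factor that materializes as $\varphi((1-n)t)$, whereas for $n$ odd it remains a genuinely separate parameter contributing a clean $g(\chi\varphi)$. Separately, the characters $\chi$ for which $\chi$, $\chi^{n}$, or $\chi^{n-1}\varphi$ becomes trivial make the corresponding Gauss sums degenerate from absolute value $\sqrt{p}$ to $-1$, and these boundary terms must be extracted by hand; assembled with the parity-dependent quadratic factor they produce exactly $\frac{\beta_n(t)-1}{p}$. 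I expect that verifying that all signs, the constants $n^{\pm n}$ and $(n-1)^{\mp(n-1)}$, and the $p$-adic valuation factors conspire to give precisely this correction, while treating the two parity cases separately, will be the most delicate part of the argument.
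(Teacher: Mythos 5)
Your overall strategy---identify ${_n\widetilde{G}_n}(t)$ with a character sum built out of Gauss sums, then use Jacobi sums and orthogonality to convert that sum into $\sum_{f_t(a)\equiv0}\varphi(a(a-1))$ plus degenerate boundary terms---is indeed the strategy of the paper, and your observation that $f_t(a)\equiv0\pmod{p}$ is equivalent to $\frac{n^n}{(n-1)^{n-1}}a^{n-1}(1-a)\equiv t\pmod{p}$ is exactly how the root set of $f_t$ enters via orthogonality. But there is a genuine gap at your very first step, and it is not a technicality. You propose to apply Gross--Koblitz to each quotient $\Gamma_p(\langle a_k-\frac{a}{p-1}\rangle)/\Gamma_p(\langle a_k\rangle)$ so as to rewrite ${_n\widetilde{G}_n}(t)$ as a sum of ratios of Gauss sums, and then to collapse the parameters with the Hasse--Davenport relation \eqref{hd} using an auxiliary character $\psi$ of order $2(n-1)$ (and, for the denominator parameters, one of order $n$). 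Gross--Koblitz converts $\Gamma_p$-values into Gauss sums only at arguments lying in $\frac{1}{p-1}\mathbb{Z}$, so your rewriting requires $\frac{2k-1}{2(n-1)}$ and $\frac{k}{n}$ to have denominators dividing $p-1$, i.e. $2(n-1)\mid p-1$ and $n\mid p-1$; the same divisibility is needed for your order-$2(n-1)$ and order-$n$ characters to exist in $\widehat{\mathbb{F}_p^\times}$. The theorem assumes only $p\nmid n(n-1)$ (take $n=4$, $p=5$: then $6\nmid p-1=4$), so your argument can at best prove the identity for primes $p\equiv1\pmod{\mathrm{lcm}(n,2(n-1))}$---which is precisely the restriction McCarthy's $p$-adic function was designed to remove, and is why the function is defined through $p$-adic gamma values rather than Gauss sums in the first place.

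The paper closes exactly this gap by running the reduction in the opposite direction, starting from a pivot sum that is defined for every $p$: $A_n(t)=\sum_{\chi}g(\varphi\chi^{n-1})g(\overline{\chi}^n)g(\overline{\chi})g(\chi^2)\overline{\chi}(\alpha t)$ with $\alpha=\frac{4(1-n)^{n-1}}{n^n}$, whose entries are honest characters for every odd $p$. In Proposition \ref{proposition-4}, Gross--Koblitz is applied to these Gauss sums (always legitimate, since their arguments have denominator $p-1$), and the resulting $\Gamma_p$-values are expanded into products over the hypergeometric parameters using the $p$-adic multiplication formulas \eqref{prod-1}, \eqref{gamma-prod-1}, \eqref{gamma-prod-2}, which are valid whenever $p\nmid m$ with no condition $m\mid p-1$; this gives $A_n(t)=(p-1)g(\varphi)(1+p\cdot{_n\widetilde{G}_n}(t))$. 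Proposition \ref{proposition-3} then evaluates the same $A_n(t)$ by the Jacobi-sum and orthogonality route you intend, where the only instance of Hasse--Davenport actually used is $m=2$ (the character $\varphi$ exists for every odd $p$), and the degenerate characters $\chi\in\{\varepsilon,\varphi\}$ arising from the $\delta$-terms in \eqref{inverse} and \eqref{gauss-jacobi} produce $\beta_n(t)$, the parity split reflecting whether $\varphi^n$ is trivial---this is the correct realization of your heuristic about the parameter $\frac12$ being ``doubled'' when $n$ is even. If you replace your character-level Hasse--Davenport collapse by these gamma-level multiplication formulas and reorganize around $A_n(t)$, your outline becomes the paper's proof; as written, it does not cover the stated range of primes.
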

If we put $a=\frac{1}{4}$, $b=\frac{1}{3}$,  and $c=\frac{1}{2}$ in \eqref{Whipple} then we obtain
\begin{align}\label{whipple-value}
{_3F_2}\left(\begin{array}{ccc}
\frac{1}{4}, & \frac{3}{4}, & \frac{1}{2}\vspace{1mm}\\
& \frac{1}{3}, & 1+\frac{2}{3}
\end{array}\mid1\right)=\frac{\pi \Gamma(\frac{1}{3})\Gamma(\frac{5}{3})}{\Gamma(\frac{7}{24})
\Gamma(\frac{23}{24})\Gamma(\frac{13}{24})\Gamma(\frac{29}{24})}.
\end{align}
This is a special case of Whipple's theorem. Using this identity we motivate our next result.
If we put $n=3$ in Theorem \ref{special-value-2} then we have 
${_3\widetilde{G}_3}(t)={_3\mathbb{G}_3}\left[\begin{array}{ccc}
\frac{1}{4}, & \frac{3}{4}, & \frac{1}{2}\vspace{1mm}\\
0, & \frac{1}{3}, & \frac{2}{3}
\end{array}\mid t\right]$ and we obtain
\begin{cor}\label{SV-2}
Let $p>3$ be a prime and $t\in\mathbb{F}_p^\times$. Let $27y^3-27y^2+4t$ be a polynomial over $\mathbb{F}_p$.
\begin{enumerate}
\item \begin{align}\label{value-111}
{_3\widetilde{G}_3}(1)=1+\varphi(-2)=\left\{\begin{array}{ll}
2, & \hbox{if $p\equiv1,3\pmod{8}$;}\\
0, & \hbox{if $p\equiv5,7\pmod{8}$.}
\end{array}
\right.
\end{align}
\item Let $t\neq1$ and $27y^3-27y^2+4t$ be irreducible over $\mathbb{F}_p$. Then
${_3\widetilde{G}_3}(t)=0$.
\item Let $t\neq1$ and $27y^3-27y^2+4t$ has one root $a\in\mathbb{F}_p$ counting with multiplicity. Then
${_3\widetilde{G}_3}(t)=\varphi(a(a-1))$.
\item Let $t\neq1$ and $a_1,a_2, a_3$ be the roots of the polynomial $27y^3-27y^2+4t$ in $\mathbb{F}_p$. Then
${_3\widetilde{G}_3}(t)=\displaystyle\sum_{i=1}^3\varphi(a_i(a_i-1))$.
\end{enumerate}
\end{cor}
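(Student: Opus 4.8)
The plan is to specialize Theorem~\ref{special-value-2} to $n=3$ and then extract each of the four parts from the factorization of the cubic $27y^3-27y^2+4t$ over $\mathbb{F}_p$. First I would note that $n=3$ is odd, so $\beta_3(t)=1$ and the term $\frac{\beta_n(t)-1}{p}$ vanishes identically. Theorem~\ref{special-value-2} therefore reduces to
$${_3\widetilde{G}_3}(t)=\sum_{f_t(a)\equiv0\pmod{p}}\varphi(a(a-1)),$$
where $f_t(y)=y^3-y^2+\frac{4t}{27}$. Since $p>3$, we have $27f_t(y)=27y^3-27y^2+4t$, so the roots of $f_t$ in $\mathbb{F}_p$ are exactly the roots of the polynomial named in the statement, and the sum runs over the distinct such roots.

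With this reduction in hand, parts (2)--(4) are immediate. If $27y^3-27y^2+4t$ is irreducible over $\mathbb{F}_p$ it has no root there, the sum is empty, and ${_3\widetilde{G}_3}(t)=0$. If it has a single root $a\in\mathbb{F}_p$ the sum consists of the one term $\varphi(a(a-1))$. If it splits completely with roots $a_1,a_2,a_3$, the sum is $\sum_{i=1}^3\varphi(a_i(a_i-1))$. These are precisely the assertions (2), (3), and (4).

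The only part demanding an actual computation is (1). Here I would factor the $t=1$ cubic by first spotting the root $y=-\tfrac13$, which gives $27y^3-27y^2+4=(3y+1)(3y-2)^2$; thus the distinct roots in $\mathbb{F}_p$ are $a=-\tfrac13$ and $a=\tfrac23$. Substituting, the simple root contributes $\varphi\!\left(\tfrac{4}{9}\right)=1$ (a square) and the double root contributes $\varphi\!\left(-\tfrac{2}{9}\right)=\varphi(-2)$, whence ${_3\widetilde{G}_3}(1)=1+\varphi(-2)$. I would close by applying the supplementary laws for the quadratic character: $\varphi(-2)=1$ exactly when $p\equiv1,3\pmod{8}$ and $\varphi(-2)=-1$ when $p\equiv5,7\pmod{8}$, producing the stated values $2$ and $0$.

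I expect no serious obstacle: once Theorem~\ref{special-value-2} is available the argument is essentially bookkeeping. The one place to be careful is that the sum in the theorem ranges over \emph{distinct} roots, so the double root at $y=\tfrac23$ when $t=1$ must be counted only once; miscounting it would spuriously double the $\varphi(-2)$ contribution. A secondary point is to confirm that $p>3$ suffices to make $3$ (and hence $-\tfrac13,\tfrac23$) meaningful in $\mathbb{F}_p$ and to render $4/9$ and $9$ units, which it does.
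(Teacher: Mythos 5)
Your proposal is correct and follows essentially the same route as the paper: specialize Theorem \ref{special-value-2} to $n=3$ (where $\beta_3(t)=1$ kills the extra term), read off parts (2)--(4) from the root structure of $27y^3-27y^2+4t$, and for part (1) use the factorization $27y^3-27y^2+4=(3y+1)(3y-2)^2$ so that the distinct roots $-\tfrac13$ and $\tfrac23$ give $1+\varphi(-2)$, evaluated via the supplementary law for $\varphi(-2)$. Your explicit caution about counting the double root only once is exactly the right point of care and is consistent with the paper's treatment.
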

\eqref{value-111} can be described as a $p$-adic analogue of \eqref{whipple-value} which is a special case of Whipple's theorem. Indeed this corollary extends \eqref{whipple-value} over $\mathbb{F}_p$ in the $p$-adic setting. 
\begin{remark}
It is of interest to discuss the zeros of the function ${_n\widetilde{G}_n}(t)$ over $\mathbb{F}_p$. For example, if $t=1$ then 
${_3\widetilde{G}_3}(1)=0$ if and only if $p\equiv5,7\pmod{8}$. Therefore, $t=1$ is a non trivial zero of the function 
${_3\widetilde{G}_3}(t)$ for infinitely many primes $p$. Let $t\in\mathbb{Q}-\{0,1\}$ and $27y^3-27y^2+4t$ be irreducible over 
$\mathbb{F}_p$ for infinitely primes $p$ then ${_3\widetilde{G}_3}(t)=0$ for infinitely many primes $p$. However, we do not know whether such $t$ exist or not. This will be an interesting question to study in future.
\end{remark}
We now investigate two more general cases of zeros of $p$-adic hypergeometric functions. By Theorem \ref{special-value-2} we obtain
\begin{cor}\label{cor-4}
Let $n>3$ be an even integer and $p$ be an odd prime satisfying $p\nmid n(n-1)$. 
If $t\in\mathbb{F}_p$ then ${_n\widetilde{G}_n}(t)=0$
if and only if $t=0$. In other words, if $t\neq0$ then ${_n\widetilde{G}_n}(t)\neq0$ for all primes 
$p\nmid n(n-1)$.
\end{cor}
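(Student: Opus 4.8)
The plan is to deduce the statement directly from the explicit evaluation in Theorem~\ref{special-value-2}, treating the two cases $t=0$ and $t\neq0$ separately, with all the real content concentrated in the latter. For $t=0$ I would argue straight from McCarthy's definition: every summand carries the factor $\overline{\omega}^a(t)$, and under the convention that all characters (the trivial one included) vanish at $0$, we have $\overline{\omega}^a(0)=0$ for every $a$, so the entire sum collapses and ${_n\widetilde{G}_n}(0)=0$. (Indeed, only the $a=0$ term could potentially survive, and the stated convention is precisely what forces it to vanish; this is also what is needed for consistency with the corollary.)

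For the main direction I want to show ${_n\widetilde{G}_n}(t)\neq0$ whenever $t\neq0$. Since $n$ is even, Theorem~\ref{special-value-2} gives $\beta_n(t)=1-(p-1)\varphi((1-n)t)$, hence
$$\frac{\beta_n(t)-1}{p}=\frac{-(p-1)\varphi((1-n)t)}{p}.$$
Because $p\nmid(n-1)$ and $t\neq0$, the element $(1-n)t$ is nonzero in $\mathbb{F}_p$, so $\varphi((1-n)t)=\epsilon$ with $\epsilon\in\{+1,-1\}$. Writing $\tfrac{-(p-1)\epsilon}{p}=-\epsilon+\tfrac{\epsilon}{p}$ and setting $S=\sum_{f_t(a)\equiv0\pmod{p}}\varphi(a(a-1))$, the formula of Theorem~\ref{special-value-2} rearranges to
$${_n\widetilde{G}_n}(t)=(S-\epsilon)+\frac{\epsilon}{p}.$$

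The decisive step is then a denominator argument. The quantity $S$ is a sum of values of $\varphi$, each lying in $\{0,\pm1\}$, so $m:=S-\epsilon$ is an integer and ${_n\widetilde{G}_n}(t)=\tfrac{mp+\epsilon}{p}$. Since $\epsilon=\pm1$ and $p\geq3$, the numerator satisfies $mp+\epsilon\equiv\epsilon\not\equiv0\pmod{p}$, so it is a nonzero integer; equivalently, an integer plus $\pm\tfrac1p$ can never vanish. Hence ${_n\widetilde{G}_n}(t)\neq0$, and combining with the $t=0$ case yields ${_n\widetilde{G}_n}(t)=0$ if and only if $t=0$. I do not expect a genuine obstacle here once Theorem~\ref{special-value-2} is available: the entire point is that evenness of $n$ forces the correction term $\tfrac{\beta_n(t)-1}{p}$ to be a fraction of exact denominator $p$, which the integer character sum cannot cancel. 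The only care required is to verify that $\varphi((1-n)t)$ is truly $\pm1$ rather than $0$ — this is exactly where the hypotheses $p\nmid n(n-1)$ and $t\neq0$ are used — and to pin down the convention guaranteeing ${_n\widetilde{G}_n}(0)=0$.
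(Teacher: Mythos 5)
Your proposal is correct and follows essentially the same route as the paper: both invoke Theorem~\ref{special-value-2} with $\beta_n(t)=1-(p-1)\varphi((1-n)t)$ and observe that the resulting term of exact denominator $p$ cannot be cancelled by the integer character sum (the paper phrases this as the impossible congruence $(p-1)\varphi((1-n)t)\equiv0\pmod{p}$, you as ``integer plus $\pm\tfrac{1}{p}$ never vanishes''). Your explicit treatment of $t=0$ via the convention $\overline{\omega}^a(0)=0$ is a minor addition the paper leaves implicit, since Theorem~\ref{special-value-2} is stated only for $t\in\mathbb{F}_p^{\times}$.
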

\begin{remark}This corollary is in contrast to Corollary \ref{cor-7}. If $p\mid n(n-1)$ then the function ${_n\widetilde{G}_n}(t)$ is not defined for these primes. Hence, by Corollary \ref{cor-4} we obtain that if $t\neq0$ then there is no prime $p$ for which 
${_n\widetilde{G}_n}(t)=0$.
\end{remark}
\section{Notation and Preliminaries}
\subsection{Multiplicative characters:}
Let $\widehat{\mathbb{F}_p^\times}$ denote the group of all multiplicative characters of $\mathbb{F}_p^{\times}$. Let $\overline{\chi}$ denote the inverse of a multiplicative character $\chi$. We extend the domain of each $\chi\in\widehat{\mathbb{F}_p^\times}$ to $\mathbb{F}_p$ by simply setting $\chi(0):=0$ including the trivial character $\varepsilon.$ 
We start with a lemma that gives orthogonality relation of multiplicative characters.
\begin{lemma}\cite[Chapter 8]{ireland}
Let $p$ be an odd prime. Then 
\begin{align}\label{orthogonal-1}
\sum_{\chi\in\widehat{\mathbb{F}_p^\times}}\chi(x)=\left\{
   \begin{array}{ll}
    p-1 , & \hbox{if $x=1$;} \\
  0, & \hbox{if $x\neq1$.}
   \end{array}
 \right.
\end{align}
\end{lemma}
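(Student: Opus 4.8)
The plan is to establish this orthogonality relation by the standard group-theoretic averaging argument, relying on the fact that $\widehat{\mathbb{F}_p^\times}$ is itself a finite abelian group under pointwise multiplication. First I would recall that $\mathbb{F}_p^\times$ is cyclic of order $p-1$, so fixing a primitive root $g$ and a primitive $(p-1)$-th root of unity $\zeta$ determines a generating character $\psi_0$ by $\psi_0(g)=\zeta$, and the full character group $\widehat{\mathbb{F}_p^\times}=\{\psi_0^j:0\le j\le p-2\}$ is cyclic of the same order $p-1$. This gives the exact count of characters and, more importantly, shows that the characters separate points: if $x\neq 1$ then writing $x=g^m$ with $1\le m\le p-2$ we have $\psi_0(x)=\zeta^m\neq 1$.

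For the first case $x=1$, every character satisfies $\chi(1)=1$, so the sum reduces to counting the characters, namely
\begin{align}
\sum_{\chi\in\widehat{\mathbb{F}_p^\times}}\chi(1)=\sum_{\chi\in\widehat{\mathbb{F}_p^\times}}1=p-1.\notag
\end{align}
For the case $x\neq 1$, I would first dispose of $x=0$ trivially, since the convention $\chi(0)=0$ makes every summand vanish. For $x\in\mathbb{F}_p^\times$ with $x\neq 1$, I would set $S:=\sum_{\chi}\chi(x)$ and choose the separating character $\psi_0$ above with $\psi_0(x)\neq 1$. Multiplying through and using that the map $\chi\mapsto\psi_0\chi$ is a bijection of $\widehat{\mathbb{F}_p^\times}$ onto itself, I obtain
\begin{align}
\psi_0(x)\,S=\sum_{\chi}\psi_0(x)\chi(x)=\sum_{\chi}(\psi_0\chi)(x)=\sum_{\chi'}\chi'(x)=S,\notag
\end{align}
so that $(\psi_0(x)-1)\,S=0$. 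Since $\psi_0(x)\neq 1$, this forces $S=0$, completing the proof.

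The only genuine content is the existence of the separating character $\psi_0$ with $\psi_0(x)\neq 1$ for $x\neq 1$; everything else is a formal consequence of the group structure and the reindexing $\chi\mapsto\psi_0\chi$. I expect this existence step to be the main (and essentially the sole) obstacle, and it is handled precisely by the cyclicity of $\mathbb{F}_p^\times$ invoked at the outset, so no additional machinery beyond the structure theorem for finite cyclic groups is required.
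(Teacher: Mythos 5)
Your proof is correct and is essentially the standard argument: the paper offers no proof of its own (it cites Ireland--Rosen, Chapter 8), and the proof there is precisely this averaging/reindexing argument via a separating character, together with counting characters when $x=1$. Your additional handling of $x=0$ via the convention $\chi(0)=0$ is a sensible touch consistent with the paper's setup.
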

Let $\zeta_p$ denote a fixed primitive $p$-th root of unity. For multiplicative character
 $\chi$ of $\mathbb{F}_p^{\times}$ the Gauss sum is defined by
\begin{align}
g(\chi):=\sum\limits_{x\in \mathbb{F}_p}\chi(x)~\zeta_p^x.\notag
\end{align}
If $\chi=\varepsilon$ then it is easy to see that $g(\varepsilon)=-1$. For more details on the properties of Gauss sum, see \cite{berndt}. Let $\delta: \widehat{\mathbb{F}_p^\times}\mapsto\{0,1\}$ be defined by
\begin{align}
\delta(\chi)=\left\{
   \begin{array}{ll}
    1 , & \hbox{if $\chi=\varepsilon$;} \\
  0, & \hbox{if $\chi\neq\varepsilon$.}
   \end{array}
 \right.
\end{align}
We now state a product formula for Gauss sums.
\begin{lemma}\cite[eq. 1.12]{greene} Let $\chi\in \widehat{\mathbb{F}_p^\times}$. Then
\begin{align}\label{inverse}
g(\chi)g(\overline{\chi})=p\cdot\chi(-1)-(p-1)\delta(\chi).
\end{align}
\end{lemma}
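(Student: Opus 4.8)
The plan is to expand the product $g(\chi)g(\overline{\chi})$ as a double character sum and then collapse it by a multiplicative change of variables together with character orthogonality. First I would write $g(\chi)g(\overline{\chi})=\sum_{x,y\in\mathbb{F}_p}\chi(x)\overline{\chi}(y)\,\zeta_p^{x+y}$. Since $\chi(0)=\overline{\chi}(0)=0$, only the terms with $x,y\in\mathbb{F}_p^\times$ contribute, so the sum effectively runs over $x,y\in\mathbb{F}_p^\times$.

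The key step is the substitution $x=ty$, with $t$ ranging over $\mathbb{F}_p^\times$ for each fixed $y\in\mathbb{F}_p^\times$. Because $\chi(y)\overline{\chi}(y)=1$ for $y\neq0$, the character factor collapses as $\chi(ty)\overline{\chi}(y)=\chi(t)$, while the exponent becomes $x+y=y(t+1)$. This decouples the double sum into $g(\chi)g(\overline{\chi})=\sum_{t\in\mathbb{F}_p^\times}\chi(t)\left(\sum_{y\in\mathbb{F}_p^\times}\zeta_p^{y(t+1)}\right)$.

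Next I would evaluate the inner sum by splitting on whether $t\equiv-1\pmod p$. If $t=-1$, then $\zeta_p^{y(t+1)}=1$ and the inner sum equals $p-1$. If $t\neq-1$, then $y\mapsto y(t+1)$ permutes $\mathbb{F}_p^\times$, so the inner sum equals $\sum_{z\in\mathbb{F}_p^\times}\zeta_p^z=-1$, using $\sum_{z\in\mathbb{F}_p}\zeta_p^z=0$. Isolating the exceptional term $t=-1$ gives $g(\chi)g(\overline{\chi})=(p-1)\chi(-1)-\sum_{t\in\mathbb{F}_p^\times,\,t\neq-1}\chi(t)$.

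Finally I would rewrite the remaining sum as $\sum_{t\in\mathbb{F}_p^\times}\chi(t)-\chi(-1)$ and invoke the companion orthogonality relation $\sum_{t\in\mathbb{F}_p^\times}\chi(t)=(p-1)\delta(\chi)$, which follows from the same multiplicative-shift argument underlying \eqref{orthogonal-1} (for $\chi\neq\varepsilon$ there is a $t_0$ with $\chi(t_0)\neq1$, and multiplying the sum by $\chi(t_0)$ merely permutes its terms, forcing it to vanish). Substituting yields $g(\chi)g(\overline{\chi})=(p-1)\chi(-1)-(p-1)\delta(\chi)+\chi(-1)=p\cdot\chi(-1)-(p-1)\delta(\chi)$, as claimed. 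There is no genuine obstacle beyond careful bookkeeping; the one subtlety is correctly separating the $t=-1$ term \emph{before} applying orthogonality, since it is precisely that single term which upgrades the coefficient of $\chi(-1)$ from $p-1$ to $p$.
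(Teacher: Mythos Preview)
Your argument is correct and is the standard proof of this classical Gauss-sum identity. Note, however, that the paper does not actually prove this lemma: it is quoted from \cite[eq.~1.12]{greene} without proof, so there is no ``paper's own proof'' to compare against. Your computation---the multiplicative substitution $x=ty$, evaluation of the inner additive sum by separating $t=-1$, and the orthogonality relation $\sum_{t\in\mathbb{F}_p^\times}\chi(t)=(p-1)\delta(\chi)$---is exactly how one establishes this identity, and your bookkeeping is accurate (in particular the check that both sides equal $1$ when $\chi=\varepsilon$ under the paper's convention $\varepsilon(0)=0$, $g(\varepsilon)=-1$).
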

For multiplicative characters $\chi$ and $\psi$ of $\mathbb{F}_p$ the Jacobi sum is defined by
\begin{align}\label{jacobi}
J(\chi,\psi):=\sum_{y\in\mathbb{F}_p}\chi(y)\psi(1-y),
\end{align}
and the normalized Jacobi sum known as binomial is defined by
\begin{align}\label{binomial}
{\chi\choose \psi}:=\frac{\psi(-1)}{p}J(\chi,\overline{\psi}).
\end{align}
The following relation provides a relation between Gauss and Jacobi sums.
\begin{lemma}\cite[eq. 1.14]{greene} 
Let $\chi_1,\chi_2\in \widehat{\mathbb{F}_p^\times}$. Then
\begin{align}\label{gauss-jacobi}
J(\chi_1,\chi_2)=\frac{g(\chi_1)g(\chi_2)}{g(\chi_1\chi_2)}+(p-1)\chi_2(-1)\delta(\chi_1\chi_2).
\end{align}
\end{lemma}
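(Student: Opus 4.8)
The plan is to prove the identity by expanding the product $g(\chi_1)g(\chi_2)$ directly from the definition of the Gauss sum and reorganizing the resulting double sum so that the Jacobi sum emerges. I would start from
$$g(\chi_1)g(\chi_2)=\sum_{x,y\in\mathbb{F}_p}\chi_1(x)\chi_2(y)\,\zeta_p^{x+y},$$
and group the terms according to the value of $s:=x+y$, so that for each fixed $s$ the inner sum becomes $\sum_{x}\chi_1(x)\chi_2(s-x)$. This reduces the computation to two cases, $s=0$ and $s\neq0$, which behave differently and are responsible, respectively, for the $\delta$-correction and for the main quotient term.

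For $s=0$ the inner sum is $\sum_x\chi_1(x)\chi_2(-x)=\chi_2(-1)\sum_{x\in\mathbb{F}_p^\times}(\chi_1\chi_2)(x)$, and by the orthogonality relation \eqref{orthogonal-1} this equals $\chi_2(-1)(p-1)\delta(\chi_1\chi_2)$. For $s\neq0$ I would apply the substitution $x\mapsto sx$, which is a bijection of $\mathbb{F}_p$ and factors the inner sum as $(\chi_1\chi_2)(s)\sum_x\chi_1(x)\chi_2(1-x)=(\chi_1\chi_2)(s)\,J(\chi_1,\chi_2)$, using $\chi_1(s)\chi_2(s)=(\chi_1\chi_2)(s)$ for $s\neq0$. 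Summing against $\zeta_p^s$ over $s\neq0$ then recovers exactly $g(\chi_1\chi_2)\,J(\chi_1,\chi_2)$, since the omitted $s=0$ term contributes nothing to $\sum_{s\in\mathbb{F}_p}(\chi_1\chi_2)(s)\zeta_p^s=g(\chi_1\chi_2)$ because $(\chi_1\chi_2)(0)=0$ by our extension convention. Combining the two cases yields the intermediate identity
$$g(\chi_1)g(\chi_2)=g(\chi_1\chi_2)\,J(\chi_1,\chi_2)+(p-1)\chi_2(-1)\delta(\chi_1\chi_2).$$

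To finish I would divide through by $g(\chi_1\chi_2)$, which is legitimate for every character since either $g(\varepsilon)=-1$ or $|g(\chi)|^2=p$ by \eqref{inverse}, so the factor is never zero. This gives $J(\chi_1,\chi_2)=\frac{g(\chi_1)g(\chi_2)}{g(\chi_1\chi_2)}-\frac{(p-1)\chi_2(-1)\delta(\chi_1\chi_2)}{g(\chi_1\chi_2)}$. The only point requiring genuine care, and what I expect to be the subtlest step, is reconciling the sign of the $\delta$-term with the stated form: when $\delta(\chi_1\chi_2)\neq0$ we necessarily have $\chi_1\chi_2=\varepsilon$ and hence $g(\chi_1\chi_2)=g(\varepsilon)=-1$, so that $-\delta(\chi_1\chi_2)/g(\chi_1\chi_2)=+\delta(\chi_1\chi_2)$, whereas when $\delta(\chi_1\chi_2)=0$ both forms of the correction term vanish identically. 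In either case the expression collapses to $\frac{g(\chi_1)g(\chi_2)}{g(\chi_1\chi_2)}+(p-1)\chi_2(-1)\delta(\chi_1\chi_2)$, which is precisely \eqref{gauss-jacobi}.
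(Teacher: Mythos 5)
Your proof is correct, and it is worth noting at the outset that the paper offers no proof of its own here: the identity is simply quoted from Greene \cite[eq.\ 1.14]{greene}, so there is nothing internal to compare against. What you have written is the standard derivation (the one found in Greene and in Ireland--Rosen): expand $g(\chi_1)g(\chi_2)$ as a double sum, fibre over $s=x+y$, split off $s=0$, and rescale $x\mapsto sx$ for $s\neq0$ to extract $(\chi_1\chi_2)(s)J(\chi_1,\chi_2)$, with the convention $(\chi_1\chi_2)(0)=0$ letting the outer sum over $s\neq0$ reassemble into $g(\chi_1\chi_2)$. Your handling of the two delicate points is right: division by $g(\chi_1\chi_2)$ is justified since $g(\varepsilon)=-1$ and $g(\chi)g(\overline{\chi})=p\,\chi(-1)\neq0$ for $\chi\neq\varepsilon$ by \eqref{inverse}, and the sign of the $\delta$-term flips correctly because $g(\varepsilon)=-1$ exactly when $\delta(\chi_1\chi_2)\neq0$. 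One small citation slip: in the $s=0$ case you invoke \eqref{orthogonal-1}, but that relation sums over \emph{characters} at a fixed element; what you actually use is the dual orthogonality
\begin{align*}
\sum_{x\in\mathbb{F}_p^{\times}}\chi(x)=(p-1)\,\delta(\chi),
\end{align*}
i.e.\ the vanishing of a nontrivial character summed over the group. That fact is equally standard (it is in \cite[Chapter 8]{ireland}), so the slip is cosmetic rather than a gap, but the reference should be corrected.
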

The following product formula of Hasse-Davenport is very important.
\begin{theorem}\cite[Hasse-Davenport relation, Theorem 11.3.5]{berndt}
Let $\psi$ be a multiplicative character of $\mathbb{F}_p^\times$ of order $m$ for some positive integer $m$. For a multiplicative character $\chi$ of $\mathbb{F}_p^\times$ we  have
\begin{align}\label{hd}
\prod_{i=0}^{m-1}g(\chi\psi^i)=g(\chi^m)\chi^{-m}(m)\prod_{i=1}^{m-1}g(\psi^i).
\end{align}
\end{theorem}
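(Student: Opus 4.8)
The plan is to recognize \eqref{hd} as the finite-field incarnation of the Gauss--Legendre multiplication formula for the Gamma function, and to deduce it by transporting each Gauss sum to a value of the $p$-adic Gamma function via the Gross--Koblitz formula and then invoking the multiplication formula for $\Gamma_p$. Since $g(\chi\psi^i)$ depends only on $\chi\psi^i$ as a power of the Teichm\"uller character, I would first normalize: write $\chi=\overline{\omega}^{\,a}$ and take $\psi=\overline{\omega}^{\,(p-1)/m}$ (the product over a full coset of $\langle\psi\rangle$ is insensitive to the choice of generator), so that $\chi\psi^i=\overline{\omega}^{\,a+i(p-1)/m}$ for $0\le i\le m-1$. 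Fixing a root $\pi$ of $X^{p-1}+p$, the Gross--Koblitz formula then expresses each $g(\chi\psi^i)$, up to the standard normalization, as a power of $\pi$ times a value of $\Gamma_p$ at the argument $\langle\frac{a}{p-1}+\frac{i}{m}\rangle$.

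Second, I would collect the factors on the two sides separately. The $\Gamma_p$-factors on the left assemble into $\prod_{i=0}^{m-1}\Gamma_p\!\left(\langle x+\frac{i}{m}\rangle\right)$ with $x=\frac{a}{p-1}$, and the multiplication formula for Morita's $\Gamma_p$ rewrites this product as $\Gamma_p(\langle mx\rangle)$ times $\prod_{i=1}^{m-1}\Gamma_p\!\left(\frac{i}{m}\right)$ times a power of $m$ (a unit, to be recorded through the Teichm\"uller character). Reading the same formula through Gross--Koblitz in the reverse direction, $\Gamma_p(\langle mx\rangle)$ is the Gamma factor attached to $g(\chi^m)$ and each $\Gamma_p\!\left(\frac{i}{m}\right)$ is the one attached to $g(\psi^i)$, while the power of $m$ turns into exactly the Teichm\"uller factor $\chi^{-m}(m)$. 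This matches the right-hand side of \eqref{hd} factor by factor.

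Third, the powers of $\pi$ must be reconciled. The $\pi$-exponent contributed by the left side is the sum over $i$ of the Gross--Koblitz exponents, and the key arithmetic input is Hermite's identity $\sum_{i=0}^{m-1}\lfloor x+\frac{i}{m}\rfloor=\lfloor mx\rfloor$, which is precisely what produces the extra power of $m$ and hence the unit $\chi^{-m}(m)$ on the right; the remaining $\pi$-exponents pair off with those of $g(\chi^m)$ and of the $g(\psi^i)$. I expect the main obstacle to be exactly this bookkeeping: keeping the $\pi$-exponents, the signs coming from the $-1$'s in Gross--Koblitz, and the factor $\chi^{-m}(m)$ simultaneously consistent, together with the degenerate cases in which one of $\chi\psi^i$, $\chi^m$, or $\psi^i$ is trivial, where the relevant Gauss sum collapses to $g(\varepsilon)=-1$ (equivalently the $\delta$-terms of \eqref{inverse} and \eqref{gauss-jacobi} intervene) and must be checked by hand.

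Finally, I would note an alternative, entirely elementary route using only the lemmas already recorded: repeatedly apply the Gauss--Jacobi relation \eqref{gauss-jacobi} to telescope $\prod_{i=0}^{m-1}g(\chi\psi^i)$ into a product of Jacobi sums times $g(\chi^m)$. Here one must track that $\prod_{i=0}^{m-1}\chi\psi^i=\chi^m\psi^{m(m-1)/2}$, which equals $\chi^m$ when $m$ is odd but carries an extra quadratic character when $m$ is even; resolving this parity subtlety and identifying the resulting Jacobi-sum product with $\chi^{-m}(m)\prod_{i=1}^{m-1}g(\psi^i)$ is the crux of that approach, and I would regard it as the more delicate of the two bookkeeping problems.
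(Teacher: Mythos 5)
The paper does not prove this theorem at all: it is quoted from Berndt--Evans--Williams, where it is established by classical manipulations of Gauss and Jacobi sums. Your main ($p$-adic) route is therefore genuinely different, and it does go through. With $\chi=\overline{\omega}^{a}$ and $\psi=\overline{\omega}^{(p-1)/m}$ (a legitimate normalization, since both sides depend only on the subgroup $\langle\psi\rangle$), Gross--Koblitz turns the left-hand side into
\begin{align}
(-1)^{m}\,\pi^{(p-1)\sum_{i=0}^{m-1}\left\langle\frac{a}{p-1}+\frac{i}{m}\right\rangle}\prod_{i=0}^{m-1}\Gamma_p\left(\left\langle\frac{a}{p-1}+\frac{i}{m}\right\rangle\right),\notag
\end{align}
and since the multiset $\left\{\left\langle\frac{a}{p-1}+\frac{i}{m}\right\rangle\right\}_{i=0}^{m-1}$ coincides with $\left\{\frac{x_0+h}{m}\right\}_{h=0}^{m-1}$ where $x_0=\left\langle\frac{ma}{p-1}\right\rangle$, formula \eqref{prod-1} rewrites the $\Gamma_p$-product as $\omega^{ma}(m)\,\Gamma_p(x_0)\prod_{h=1}^{m-1}\Gamma_p\left(\frac{h}{m}\right)$, with $\omega^{ma}(m)=\chi^{-m}(m)$. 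Hermite's identity then gives $\sum_{i}\left\langle\frac{a}{p-1}+\frac{i}{m}\right\rangle=x_0+\frac{m-1}{2}$, which is exactly the $\pi$-exponent carried by $g(\chi^m)\prod_{i=1}^{m-1}g(\psi^i)$, and the signs $(-1)^m$ on both sides agree. Two small corrections to your bookkeeping: the unit $\chi^{-m}(m)$ comes from the $\omega(m^{(1-x)(1-p)})$ factor of \eqref{prod-1}, not from Hermite's identity (which only reconciles the $\pi$-exponents); and the degenerate cases you flag need no separate treatment, because Gross--Koblitz is uniform in $j$ and for the trivial character returns $-\pi^{0}\Gamma_p(0)=-1=g(\varepsilon)$ automatically.

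The one genuine danger in your plan is circularity, and you should make the dependence explicit. The relations \eqref{gamma-prod-1} and \eqref{gamma-prod-2}, as derived in McCarthy's paper, are themselves obtained from Gross--Koblitz \emph{together with} the Hasse--Davenport relation, so they cannot serve as your input. Your argument is sound precisely because it can rest on the Gauss--Legendre multiplication formula \eqref{prod-1} for Morita's $\Gamma_p$, which has independent proofs by direct $p$-adic analysis (Morita, Robert, Cohen) making no use of Gauss sums. As for what each approach buys: the classical proof in the cited source is self-contained at the level of character sums, while yours shows that, once Gross--Koblitz and the $\Gamma_p$ multiplication formula are granted, Hasse--Davenport is a purely formal consequence --- conceptually tidy, but resting on heavier analytic input. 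Your alternative telescoping route via \eqref{gauss-jacobi} is essentially the classical argument of the cited source; the parity issue you identify ($\prod_{i}\chi\psi^{i}=\chi^{m}$ or $\chi^{m}\varphi$ according as $m$ is odd or even) is real, and since you do not resolve it, only the $p$-adic route stands as a complete proof in your proposal.
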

We now recall certain properties of binomial from \cite[eq. 2.12, eq. 2.7]{greene}.
\begin{align}\label{rel-1}
{\chi\choose\varepsilon}={\chi\choose\chi}=\frac{-1}{p}+\frac{p-1}{p}\delta(\chi),
\end{align}
and
\begin{align}\label{rel-2}
{\chi\choose\psi}={\chi\choose\chi\overline{\psi}}.
\end{align}
\begin{proposition}\label{proposition-root}\cite[Proposition 2.37]{niven}
Let $p$ is a prime and $\gcd{(b,p)}=1$. If $n$ is a positive integer and $d=\gcd{(n,p-1)}$ then the congruence $y^n\equiv b\pmod{p}$ has $d$ solutions or no solution according as $b^{\frac{p-1}{d}}\equiv1\pmod{p}$ or not.
\end{proposition}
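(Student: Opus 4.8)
The plan is to reduce the multiplicative congruence $y^n\equiv b\pmod p$ to a linear congruence by passing to discrete logarithms, and then read off both the solvability criterion and the solution count from elementary facts. First I would fix a primitive root $g$ modulo $p$, so that $\mathbb{F}_p^\times$ is cyclic of order $p-1$ generated by $g$. Since $\gcd(b,p)=1$, I may write $b\equiv g^k\pmod p$ for a unique $k$ with $0\le k<p-1$, and every candidate solution can be written $y\equiv g^x\pmod p$. Then $y^n\equiv b\pmod p$ becomes $g^{nx}\equiv g^k\pmod p$, which (because $g$ has order exactly $p-1$) is equivalent to the linear congruence $nx\equiv k\pmod{p-1}$ in the unknown residue $x$ modulo $p-1$.

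The second step is to invoke the standard theory of linear congruences: the congruence $nx\equiv k\pmod{p-1}$ is solvable if and only if $d=\gcd(n,p-1)$ divides $k$, and in that case it has exactly $d$ incongruent solutions modulo $p-1$. Since the map $x\mapsto g^x$ is a bijection between residues $x$ modulo $p-1$ and elements of $\mathbb{F}_p^\times$, these $d$ values of $x$ produce exactly $d$ incongruent solutions $y$ modulo $p$, whereas if $d\nmid k$ there are none. It therefore only remains to match the index condition $d\mid k$ with the stated power-residue criterion.

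For this final translation I would compute $b^{(p-1)/d}\equiv g^{k(p-1)/d}\pmod p$; since $g$ has order $p-1$, this equals $1$ precisely when $(p-1)\mid k(p-1)/d$, that is, precisely when $d\mid k$. Hence $b^{(p-1)/d}\equiv1\pmod p$ holds exactly when $d\mid k$, which is exactly the solvability condition found above, and combining the two cases proves the claim. There is essentially no serious obstacle here, as each ingredient is elementary; the only points requiring care are the equivalence $g^{nx}\equiv g^k\iff nx\equiv k\pmod{p-1}$, which rests on $g$ being a primitive root, and the bookkeeping that distinct residues $x$ modulo $p-1$ genuinely yield distinct $y$. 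A slicker alternative, if one prefers to avoid choosing a primitive root, is to view $y\mapsto y^n$ as an endomorphism of the cyclic group $\mathbb{F}_p^\times$: its kernel is the group of $n$-th roots of unity, which has order $d$, so its image is the unique subgroup of order $(p-1)/d$, namely $\{x:x^{(p-1)/d}=1\}$; then $b$ lies in the image (and has exactly $d$ preimages) if and only if $b^{(p-1)/d}\equiv1\pmod p$.
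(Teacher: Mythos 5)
Your proof is correct and complete. The paper itself offers no proof of this statement---it is imported verbatim from \cite[Proposition 2.37]{niven} as a classical fact---and your discrete-logarithm argument (reduce $y^n\equiv b\pmod{p}$ to the linear congruence $nx\equiv k\pmod{p-1}$ via a primitive root, count solutions by the theory of linear congruences, and translate the solvability condition $d\mid k$ into $b^{\frac{p-1}{d}}\equiv1\pmod{p}$) is precisely the standard textbook proof of that result; your alternative formulation via the kernel and image of the endomorphism $y\mapsto y^n$ of the cyclic group $\mathbb{F}_p^\times$ is an equally valid route to the same conclusion.
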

\subsection{$p$-adic preliminaries:}
Let $\overline{\mathbb{Q}_p}$ denote the algebraic closure of $\mathbb{Q}_p$ and $\mathbb{C}_p$ denote the completion of $\overline{\mathbb{Q}_p}$. 
For a positive integer $n$, the $p$-adic gamma function $\Gamma_p(n)$ is defined as
\begin{align}
\Gamma_p(n):=(-1)^n\prod\limits_{0<j<n,p\nmid j}j.\notag
\end{align}
 It can be extended to all $x\in\mathbb{Z}_p$ by setting $\Gamma_p(0):=1$ and for $x\neq0$
\begin{align}
\Gamma_p(x):=\lim_{x_n\rightarrow x}\Gamma_p(x_n).\notag
\end{align}
Let $\pi \in \mathbb{C}_p$ be the fixed root of $x^{p-1} + p=0$ and
$\pi \equiv \zeta_p-1 \pmod{(\zeta_p-1)^2}$. The result given below is known as Gross-Koblitz formula. 
\begin{theorem}\cite[Gross-Koblitz formula]{gross}\label{gross-koblitz} For $j\in \mathbb{Z}$,
\begin{align}
g(\overline{\omega}^j)=-\pi^{(p-1)\langle\frac{j}{p-1} \rangle}\Gamma_p\left(\left\langle \frac{j}{p-1} \right\rangle\right).\notag
\end{align}
\end{theorem}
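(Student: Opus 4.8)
The statement is the Gross--Koblitz formula, quoted here as a known result; nevertheless I sketch how one proves it, following Dwork's analytic method. The plan is to convert the Gauss sum $g(\overline{\omega}^j)=\sum_{x\in\mathbb{F}_p^\times}\overline{\omega}^j(x)\zeta_p^x$ into an extraction of Taylor coefficients of a $p$-adic analytic ``splitting function'', and then to identify the resulting quantity with a value of $\Gamma_p$.

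First I would introduce Dwork's splitting function $\theta(X)=\exp\!\left(\sum_{i\geq0}\pi^{p^i}X^{p^i}/p^i\right)$, the Artin--Hasse exponential twisted by $\pi$. Using $\pi^{p-1}=-p$ together with the normalization $\pi\equiv\zeta_p-1\pmod{(\zeta_p-1)^2}$, a Dwork-lemma estimate shows that $\theta$ is \emph{overconvergent}, its radius of convergence being strictly larger than $1$, and that $\theta(1)=\zeta_p$. Since a Teichm\"{u}ller representative $\hat{x}=\omega(x)$ satisfies $\hat{x}^{p^i}=\hat{x}$ for all $i$, the exponent collapses and one gets $\theta(\hat{x})=\zeta_p^{\hat{x}}=\zeta_p^{x}$, so $\theta$ reproduces the additive character $x\mapsto\zeta_p^{x}$ on Teichm\"{u}ller points. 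Next I would substitute this into the Gauss sum: writing $\overline{\omega}^j(x)=\hat{x}^{-j}$ and $\theta(X)=\sum_m c_mX^m$, and using $\sum_{\hat{x}^{p-1}=1}\hat{x}^{k}=(p-1)$ when $(p-1)\mid k$ and $0$ otherwise (valid because $\theta$ converges at the roots of unity, which have absolute value $1$), the double sum collapses to $g(\overline{\omega}^j)=(p-1)\sum_{m\equiv j\,(p-1)}c_m$. It then remains to prove $(p-1)\sum_{m\equiv j\,(p-1)}c_m=-\pi^{(p-1)\langle j/(p-1)\rangle}\Gamma_p(\langle j/(p-1)\rangle)$.

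This last identification is the crux and the step I expect to be hardest. The power of $\pi$ is the $\pi$-adic valuation of $g(\overline{\omega}^j)$ and is pinned down by Stickelberger's congruence; the remaining unit factor must be matched with $\Gamma_p(\langle j/(p-1)\rangle)$. I would organize this by a uniqueness argument: both $j\mapsto g(\overline{\omega}^j)$ and $j\mapsto-\pi^{(p-1)\langle j/(p-1)\rangle}\Gamma_p(\langle j/(p-1)\rangle)$ satisfy the same normalization ($g(\varepsilon)=-1$ against $\Gamma_p(0)=1$), the same reflection relation (compare \eqref{inverse}, $g(\chi)g(\overline{\chi})=\chi(-1)p$ for $\chi\neq\varepsilon$, with $\Gamma_p(x)\Gamma_p(1-x)=\pm1$), and the same multiplication relation (compare the Hasse--Davenport relation \eqref{hd} with the Gauss multiplication formula for $\Gamma_p$); together with $p$-adic continuity in $j$ these determine both functions and force equality. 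The genuine difficulty, and where Dwork's estimates on the overconvergent $\theta$ do the real work, is justifying the analytic manipulations and computing the unit part precisely enough to recover the Gamma value itself rather than merely its valuation.
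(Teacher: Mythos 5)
A preliminary remark: the paper contains no proof of Theorem \ref{gross-koblitz} at all --- it is imported verbatim from Gross and Koblitz \cite{gross} and used as a black box in the proofs of Propositions \ref{proposition-2} and \ref{proposition-4} --- so your attempt can only be judged on its own terms, not against an internal argument of the paper.

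The first two-thirds of your sketch is the standard Dwork-style opening and is correct in outline: the overconvergent splitting series, its values at Teichm\"{u}ller points reproducing the additive character, and the collapse $g(\overline{\omega}^j)=(p-1)\sum_{m\equiv j \pmod{p-1}}c_m$. (Even there, ``the exponent collapses'' is not a termwise evaluation, since $\exp$ does not converge at the relevant points; that $\theta(\hat{x})=\zeta_p^{x}$ is a lemma about the overconvergent series. But as an outline this part is fine.) The genuine gap is exactly where you locate it, and your proposed repair fails for a concrete reason. First, for a fixed prime $p$ the parameter $j$ only matters modulo $p-1$, a finite set, so ``$p$-adic continuity in $j$'' is vacuous and supplies no rigidity. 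Second, and fatally, the constraints you list cannot single out the Gauss sum: let $u\neq1$ satisfy $u^{p-1}=1$, write $j'=(p-1)\langle\frac{j}{p-1}\rangle$ for the representative of $j$ modulo $p-1$, and consider the twisted family $j\mapsto u^{j'}g(\overline{\omega}^j)$. It satisfies your normalization at $j=0$ (since $u^{0}=1$), the reflection relation matching \eqref{inverse} (since $u^{j'}u^{(p-1-j')}=u^{p-1}=1$), and the Hasse--Davenport relation \eqref{hd} (taking $\psi=\overline{\omega}^{(p-1)/m}$, the total $u$-exponent on either side is $\equiv mj+(p-1)(m-1)/2 \pmod{p-1}$, so the extra factors cancel), yet it differs from $j\mapsto g(\overline{\omega}^j)$. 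Equivalently, replacing $\pi$ by the root $u\pi$ of $x^{p-1}+p=0$ preserves every one of your constraints but changes the claimed right-hand side. So the constraint system has at least $p-1$ distinct solutions; what it cannot see is the normalization $\pi\equiv\zeta_p-1\pmod{(\zeta_p-1)^2}$, i.e.\ the pairing between a choice of $\pi$ and a choice of $\zeta_p$, and that pairing is precisely the content of the Gross--Koblitz formula beyond Stickelberger's valuation statement. This is why every actual proof (the original cohomological one, the Dwork--Boyarsky analytic one, Koblitz's coefficient-congruence computation, Robert's elementary proof) computes the unit part of $(p-1)\sum_{m\equiv j}c_m$ directly rather than attempting to characterize it by algebraic identities among Gauss sums.
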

 An important product formula of $p$-adic gamma functions is given below.
If $m\in\mathbb{Z}^+$, 
$p\nmid m$,  and $x=\frac{r}{p-1}$ with $0\leq r\leq p-1$ then
\begin{align}\label{prod-1}
\prod_{h=0}^{m-1}\Gamma_p\left(\frac{x+h}{m}\right)=\omega(m^{(1-x)(1-p)})~\Gamma_p(x)\prod_{h=1}^{m-1}\Gamma_p\left(\frac{h}{m}\right).
\end{align}
The next two relations are essentially contained in \cite{mccarthy-pacific}. Let $t\in\mathbb{Z}^{+}$ and $p\nmid t$. Then for 
$0\leq j\leq p-2$ we have
\begin{align}\label{gamma-prod-1}
\omega(t^{tj})\Gamma_p\left(\left\langle\frac{tj}{p-1}\right\rangle\right)\prod_{h=1}^{t-1}\Gamma_p\left(\frac{h}{t}\right)
=\prod_{h=0}^{t-1}\Gamma_p\left(\left\langle\frac{h}{t}+\frac{j}{p-1}\right\rangle\right),
\end{align}
and
\begin{align}\label{gamma-prod-2}
\omega(t^{-tj})\Gamma_p\left(\left\langle\frac{-tj}{p-1}\right\rangle\right)\prod_{h=1}^{t-1}\Gamma_p\left(\frac{h}{t}\right)
=\prod_{h=1}^{t}\Gamma_p\left(\left\langle\frac{h}{t}-\frac{j}{p-1}\right\rangle\right).
\end{align}
\begin{lemma}\label{new-lemma}
Let $m\geq1$ be positive integer and $p$ be an odd prime. For $1\leq j\leq p-2$ we have
$\left\lfloor\frac{mj}{p-1}\right\rfloor
=\displaystyle\sum_{h=0}^{m-1}\left\lfloor\frac{h}{m}+\frac{j}{p-1}\right\rfloor$,
$\left\lfloor\frac{1}{2}-\frac{mj}{p-1}\right\rfloor=\displaystyle\sum_{h=0}^{m-1}\left\lfloor\frac{1+2h}{2m}-\frac{j}{p-1}\right\rfloor$,
and
 $\left\lfloor\frac{-2j}{p-1}\right\rfloor=-1+\left\lfloor\frac{1}{2}-\frac{j}{p-1}\right\rfloor$.
\end{lemma}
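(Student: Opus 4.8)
The plan is to obtain all three identities from a single classical tool, \emph{Hermite's identity}, which asserts that for every real number $x$ and every positive integer $m$,
\[
\left\lfloor m x\right\rfloor=\sum_{h=0}^{m-1}\left\lfloor x+\frac{h}{m}\right\rfloor.
\]
For the first identity I would simply substitute $x=\frac{j}{p-1}$: then the left-hand side is $\left\lfloor\frac{mj}{p-1}\right\rfloor$ and each summand is $\left\lfloor\frac{h}{m}+\frac{j}{p-1}\right\rfloor$, which reproduces the claimed sum verbatim. Note that no restriction on the range of $j$ is needed for this part.

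For the second identity I would apply Hermite's identity to the shifted argument $x=\frac{1}{2m}-\frac{j}{p-1}$. The left-hand side then becomes $\left\lfloor m x\right\rfloor=\left\lfloor\frac{1}{2}-\frac{mj}{p-1}\right\rfloor$, while each summand becomes
\[
x+\frac{h}{m}=\frac{1}{2m}+\frac{h}{m}-\frac{j}{p-1}=\frac{1+2h}{2m}-\frac{j}{p-1},
\]
which is exactly the term appearing on the right-hand side. Hence the second identity is again an immediate specialization, and once more the range of $j$ plays no role.

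The third identity I would deduce from the case $m=2$ of Hermite's identity, namely $\lfloor 2y\rfloor=\lfloor y\rfloor+\left\lfloor y+\frac{1}{2}\right\rfloor$, applied to $y=-\frac{j}{p-1}$. This yields
\[
\left\lfloor\frac{-2j}{p-1}\right\rfloor=\left\lfloor-\frac{j}{p-1}\right\rfloor+\left\lfloor\frac{1}{2}-\frac{j}{p-1}\right\rfloor.
\]
Here, and only here, the hypothesis enters: since $1\le j\le p-2$ and $p$ is odd we have $0<\frac{j}{p-1}<1$, so $-1<-\frac{j}{p-1}<0$ and therefore $\left\lfloor-\frac{j}{p-1}\right\rfloor=-1$. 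Substituting this value gives the stated relation $\left\lfloor\frac{-2j}{p-1}\right\rfloor=-1+\left\lfloor\frac{1}{2}-\frac{j}{p-1}\right\rfloor$.

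Since each part reduces to a substitution into Hermite's identity, the argument is essentially mechanical, and I do not anticipate a serious obstacle. The one place requiring genuine care is the third identity, where one must use $1\le j\le p-2$ to evaluate $\left\lfloor-\frac{j}{p-1}\right\rfloor$ exactly. If a self-contained account were preferred, the only extra work would be a short proof of Hermite's identity itself, for instance by verifying that the difference of the two sides is a function of period $\frac{1}{m}$ in $x$ that vanishes at $x=0$.
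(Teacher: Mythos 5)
Your proposal is correct, and all three substitutions check out: $x=\frac{j}{p-1}$ gives the first identity verbatim, $x=\frac{1}{2m}-\frac{j}{p-1}$ gives $mx=\frac{1}{2}-\frac{mj}{p-1}$ and $x+\frac{h}{m}=\frac{1+2h}{2m}-\frac{j}{p-1}$ as claimed, and the third follows from the $m=2$ case together with $\left\lfloor-\frac{j}{p-1}\right\rfloor=-1$, which is where the hypothesis $1\le j\le p-2$ is genuinely used. The paper organizes this differently: it argues directly that $\left\lfloor\frac{mj}{p-1}\right\rfloor$ can only take the values $0,1,\ldots,m-1$ (again because $0<\frac{j}{p-1}<1$) and then counts how many summands $\left\lfloor\frac{h}{m}+\frac{j}{p-1}\right\rfloor$ equal $1$, dismissing the remaining two identities as similarly straightforward direct verifications. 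Your route packages the same elementary content more cleanly: everything is a specialization of Hermite's identity $\lfloor mx\rfloor=\sum_{h=0}^{m-1}\left\lfloor x+\frac{h}{m}\right\rfloor$, which in particular exposes that the first two identities hold for an arbitrary real argument with no restriction on $j$, and isolates the hypothesis on $j$ to the single point where it matters. The paper's counting argument, by contrast, is self-contained and tied to the specific rational arguments at hand; but since Hermite's identity admits the two-line periodicity proof you sketch, your version is effectively self-contained as well, and arguably easier to check uniformly across all three identities.
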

\begin{proof}
As  $\left\lfloor\dfrac{mj}{p-1}\right\rfloor=0,1,\ldots,$ or $m-1$, it gives
$\left\lfloor\dfrac{mj}{p-1}\right\rfloor=\displaystyle\sum_{h=0}^{m-1}
\left\lfloor\frac{h}{m}+\frac{j}{p-1}\right\rfloor$.
Similarly, it is straightforward to verify the remaining equalities.
\end{proof}
\section{Proof of main theorems}
Before going to prove the main results regarding the values of ${_nG_n}(t)$ for arbitrary $t$ we prove two propositions.
\begin{proposition}\label{proposition-1}
Let $n\geq3$ be an integer and $p$ be an odd prime such that $p\nmid n$. For $t\in\mathbb{F}_p^{\times}$ let 
$B_n(t)=\displaystyle\sum_{\chi\in\widehat{\mathbb{F}_p^\times}}g(\varphi\chi^n)g(\overline{\chi}^n)\overline{\chi}((-1)^nt)$. If $t$ is $n$-th power residue modulo $p$ then we have 
$$B_n(t)=(p-1)g(\varphi)\displaystyle\sum_{\substack{a\in\mathbb{F}_p\\a^n\equiv t\pmod{p}}}\varphi(a)\varphi(a-1).$$ 
Otherwise, $B_n(t)=0$.

\end{proposition}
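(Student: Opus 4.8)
The plan is to reduce the product of Gauss sums to a single Jacobi sum, expand it as a character sum over $\mathbb{F}_p$, and then collapse the outer sum over $\chi$ by orthogonality. First I would apply the Gauss--Jacobi relation \eqref{gauss-jacobi} with $\chi_1=\varphi\chi^n$ and $\chi_2=\overline{\chi}^n$. Since $\chi_1\chi_2=\varphi\neq\varepsilon$, the correction term $\delta(\varphi)$ vanishes, giving
\begin{align}
g(\varphi\chi^n)g(\overline{\chi}^n)=g(\varphi)\,J(\varphi\chi^n,\overline{\chi}^n)\notag
\end{align}
for every $\chi$. Substituting the definition \eqref{jacobi} of the Jacobi sum and interchanging the (finite) order of summation gives
\begin{align}
B_n(t)=g(\varphi)\sum_{y\in\mathbb{F}_p}\varphi(y)\sum_{\chi\in\widehat{\mathbb{F}_p^\times}}\overline{\chi}((-1)^nt)\,\chi^n(y)\,\overline{\chi}^n(1-y).\notag
\end{align}

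Next I would dispose of the boundary terms. The term $y=0$ contributes nothing because $\varphi(0)=0$, and the term $y=1$ contributes nothing because $\overline{\chi}^n(1-y)=\overline{\chi}^n(0)=0$ for every $\chi$. For $y\neq0,1$ both $y$ and $1-y$ lie in $\mathbb{F}_p^\times$, so the inner character may be written as $\chi\bigl((-1)^{-n}t^{-1}\,y^n(1-y)^{-n}\bigr)$. Using $(1-y)^n=(-1)^n(y-1)^n$ this argument equals $t^{-1}\bigl(\tfrac{y}{y-1}\bigr)^n$, and the orthogonality relation \eqref{orthogonal-1} shows the inner sum equals $p-1$ exactly when $\bigl(\tfrac{y}{y-1}\bigr)^n=t$ and is $0$ otherwise (here the argument is never $0$, so the $\chi(0)=0$ convention causes no difficulty). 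Hence
\begin{align}
B_n(t)=(p-1)g(\varphi)\sum_{\substack{y\neq0,1\\ (y/(y-1))^n=t}}\varphi(y).\notag
\end{align}

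Finally I would make the Möbius substitution $a=\frac{y}{y-1}$, whose inverse is $y=\frac{a}{a-1}$; this is a bijection of $\mathbb{F}_p\setminus\{1\}$ onto itself carrying $y\neq0,1$ to $a\neq0,1$ and satisfying $a^n=t$. Using that $\varphi$ is quadratic, one computes $\varphi(y)=\varphi\bigl(\frac{a}{a-1}\bigr)=\varphi(a)\varphi(a-1)^{-1}=\varphi(a)\varphi(a-1)$, which yields
\begin{align}
B_n(t)=(p-1)g(\varphi)\sum_{\substack{a\in\mathbb{F}_p\\ a^n\equiv t\pmod p}}\varphi(a)\varphi(a-1),\notag
\end{align}
since the possibly omitted value $a=1$ (which occurs only when $t=1$) contributes $\varphi(0)=0$, while $a=0$ never satisfies $a^n=t$ for $t\neq0$. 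When $t$ is not an $n$-th power residue, Proposition \ref{proposition-root} guarantees that $y^n\equiv t$ has no solution, so the displayed sum is empty and $B_n(t)=0$.

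This argument is essentially a direct orthogonality collapse, so I do not expect a serious obstacle; the main care lies in the bookkeeping, namely verifying that the excluded points $y\in\{0,1\}$ and $a\in\{0,1\}$ contribute nothing and tracking the sign factor $(-1)^n$ through the identity $(-1)^{-n}y^n(1-y)^{-n}=\bigl(\tfrac{y}{y-1}\bigr)^n$ used in the orthogonality step.
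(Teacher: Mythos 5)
Your proof is correct and follows essentially the same route as the paper: apply the Gauss--Jacobi relation \eqref{gauss-jacobi} to write $g(\varphi\chi^n)g(\overline{\chi}^n)=g(\varphi)\,J(\varphi\chi^n,\overline{\chi}^n)$, expand the Jacobi sum, and collapse the sum over $\chi$ by orthogonality \eqref{orthogonal-1}. The only difference is bookkeeping: the paper first passes to a binomial coefficient and invokes \eqref{rel-2} so that orthogonality lands directly on the condition $y^n\equiv t\pmod{p}$ with summand $\varphi(y)\varphi(y-1)$, whereas you expand $J(\varphi\chi^n,\overline{\chi}^n)$ directly and recover the same condition via the explicit substitution $a=\frac{y}{y-1}$ at the end---two equivalent manipulations.
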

\begin{proof}
By \eqref{gauss-jacobi}, and \eqref{rel-2} it follows that
\begin{align}\label{eq-21}
B_n(t)&=\sum_{\chi\in\widehat{\mathbb{F}_p^\times}}g(\varphi\chi^n)g(\overline{\chi}^n)\overline{\chi}((-1)^nt)
=pg(\varphi)\sum_{\chi\in\widehat{\mathbb{F}_p^\times}}{\varphi\chi^n\choose\varphi}\overline{\chi}(t)\notag\\
&=\varphi(-1)g(\varphi)\sum_{y\in\mathbb{F}_p}\varphi(y)\varphi(1-y)\sum_{\chi\in\widehat{\mathbb{F}_p^\times}}\chi\left(\frac{y^n}{t}\right).
\end{align}
\eqref{orthogonal-1} gives that the latter sum present in \eqref{eq-21} is non zero only if $y^n\equiv t\pmod{p}$ has a solution in $\mathbb{F}_p^{\times}$. Therefore, if $t$ is $n$-th power residue modulo $p$ then we obtain $$B_n(t)=(p-1)g(\varphi)\displaystyle\sum_{\substack{a\in\mathbb{F}_p\\ a^n\equiv t\pmod{p}}}\varphi(a)\varphi(a-1).$$ On the other hand, if  $t$ is not a $n$-th power residue modulo $p$ then we have $B_{n}(t)=0$. This completes the proof of the proposition.
\end{proof}
\begin{proposition}\label{proposition-2}
Let $n\geq3$ be an integer and $p\geq 3$ be a prime such that $p\nmid n$. For $t\in\mathbb{F}_p^{\times}$ let
$B_n(t)=\displaystyle\sum_{\chi\in\widehat{\mathbb{F}_p^\times}}g(\varphi\chi^n)g(\overline{\chi}^n)\overline{\chi}((-1)^nt)$. Then
\begin{align}
B_n(t)=(p-1)g(\varphi)~
{_nG_n}(t).\notag
\end{align}
\end{proposition}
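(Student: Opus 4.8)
The plan is to begin from McCarthy's definition of ${_nG_n}(t)$, multiply through by $(p-1)g(\varphi)$, and show that the resulting summand is exactly the summand of $B_n(t)$. First I would parametrise $\widehat{\mathbb{F}_p^\times}$ by writing $\chi=\omega^a$ for $0\le a\le p-2$, so that $\overline{\chi}=\overline{\omega}^a$, $\chi^n=\omega^{an}$ and $\overline{\chi}^n=\overline{\omega}^{an}$. Since $\omega(-1)=-1$, the twist $\overline{\chi}((-1)^n t)$ becomes $(-1)^{an}\overline{\omega}^a(t)$, which already coincides with the character factor $(-1)^{an}\overline{\omega}^a(t)$ in the definition. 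Writing $\varphi=\overline{\omega}^{(p-1)/2}$ gives $\varphi\omega^{an}=\overline{\omega}^{(p-1)/2-an}$, so both Gauss sums in $B_n(t)$ can be evaluated by the Gross--Koblitz formula (Theorem \ref{gross-koblitz}); their two leading signs multiply to $+1$, leaving $\pi$-powers together with $\Gamma_p(y)$ and $\Gamma_p(z)$, where $y=\langle\frac{1}{2}-\frac{an}{p-1}\rangle$ and $z=\langle\frac{an}{p-1}\rangle$. I would also record $-g(\varphi)=\pi^{(p-1)/2}\Gamma_p(\frac{1}{2})$.

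Next I would collapse the two length-$n$ $\Gamma_p$-products in the definition. For the denominator parameters, $\prod_{k}\Gamma_p(\langle -b_k+\frac{a}{p-1}\rangle)=\prod_{h=0}^{n-1}\Gamma_p(\langle\frac{h}{n}+\frac{a}{p-1}\rangle)$, because $\{-h/n \bmod 1\}=\{h/n \bmod 1\}$; then \eqref{gamma-prod-1}, or equivalently the multiplication formula \eqref{prod-1}, reduces this to $\Gamma_p(z)$ times an $\omega$-factor and $\prod_{h=1}^{n-1}\Gamma_p(h/n)$. For the numerator parameters $a_k=\frac{2k-1}{2n}=\frac{1}{2n}+\frac{k-1}{n}$, the set $\{\langle\frac{2h+1}{2n}-\frac{a}{p-1}\rangle:0\le h\le n-1\}$ equals $\{\frac{y+h}{n}:0\le h\le n-1\}$, so \eqref{prod-1} reduces the numerator product to $\Gamma_p(y)$ times an $\omega$-factor and the same constant $\prod_{h=1}^{n-1}\Gamma_p(h/n)$. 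Treating the fixed denominators $\prod_k\Gamma_p(\langle a_k\rangle)$ and $\prod_k\Gamma_p(\langle -b_k\rangle)$ in the same way, every copy of $\prod_{h=1}^{n-1}\Gamma_p(h/n)$ occurs squared in both numerator and denominator and cancels, so the full ratio of $\Gamma_p$-products collapses to $\Gamma_p(y)\Gamma_p(z)/\Gamma_p(\frac{1}{2})$, up to a power of $\omega(n)$.

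The last step is to reconcile the $\pi$-powers coming from Gross--Koblitz with the $(-p)$-powers that carry the floor functions, using $(-p)=\pi^{p-1}$. Here Lemma \ref{new-lemma} is decisive: it yields $\sum_k\lfloor\langle a_k\rangle-\frac{a}{p-1}\rfloor=\lfloor\frac{1}{2}-\frac{an}{p-1}\rfloor$ and $\sum_k\lfloor\langle -b_k\rangle+\frac{a}{p-1}\rfloor=\lfloor\frac{an}{p-1}\rfloor$, so the total $(-p)$-exponent is $-\lfloor\frac{1}{2}-\frac{an}{p-1}\rfloor-\lfloor\frac{an}{p-1}\rfloor$; combined with the $\pi^{(p-1)/2}$ from $-g(\varphi)$ and the cancellation of $\Gamma_p(\frac{1}{2})$, this produces exactly $\pi^{(p-1)(y+z)}\Gamma_p(y)\Gamma_p(z)$, since $y+z=\frac{1}{2}-\lfloor\frac{1}{2}-\frac{an}{p-1}\rfloor-\lfloor\frac{an}{p-1}\rfloor$. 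One also checks that the accumulated power of $\omega(n)$ is trivial because $\omega(n)^{p-1}=1$. Reading the outcome back through Gross--Koblitz identifies each per-$a$ term (stripped of $(-1)^{an}\overline{\omega}^a(t)$) as $g(\varphi\omega^{an})g(\overline{\omega}^{an})$, and summation over $a$ returns $B_n(t)$. I expect the principal obstacle to be precisely this exponent reconciliation, keeping the $\pi$-powers, the floor-function $(-p)$-powers, and the $\omega(n)$ factors aligned through the multiplication formulas, rather than any conceptual difficulty: once the products are collapsed the identity is essentially forced.
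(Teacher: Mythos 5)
Your proof is correct and is essentially the paper's own argument run in reverse: the paper starts from $B_n(t)$, applies Gross--Koblitz, and then uses \eqref{prod-1}, \eqref{gamma-prod-1} and Lemma \ref{new-lemma} to expand into the defining sum of ${_nG_n}(t)$, whereas you collapse the definition of ${_nG_n}(t)$ using exactly the same three ingredients (together with the constant evaluation $\prod_{h=0}^{n-1}\Gamma_p(\tfrac{1+2h}{2n})/\Gamma_p(\tfrac{h}{n})=\varphi(n)\Gamma_p(\tfrac{1}{2})$ and $g(\varphi)=-\pi^{(p-1)/2}\Gamma_p(\tfrac{1}{2})$) and read the result back through Gross--Koblitz. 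The difference is purely one of bookkeeping direction; every key identity and cancellation, including the conjugate $\omega(n)$-factors annihilating each other and the floor-sum identities matching $y+z=\tfrac{1}{2}-\lfloor\tfrac{1}{2}-\tfrac{an}{p-1}\rfloor-\lfloor\tfrac{an}{p-1}\rfloor$, coincides with the paper's computation.
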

\begin{proof}
Taking $\chi=\omega^j$ and applying Gross-Koblitz formula we have 
\begin{align}\label{eq-1000}
B_n(t)&=\sum_{j=0}^{p-2}\pi^{(p-1)\ell_j}~\Gamma_p\left(\left\langle\frac{1}{2}-\frac{nj}{p-1}\right\rangle\right)
\Gamma_p\left(\left\langle\frac{nj}{p-1}\right\rangle\right)\overline{\omega}^j((-1)^nt),
\end{align}
where $\ell_j=\frac{1}{2}
-\left\lfloor\frac{1}{2}-\frac{nj}{p-1}\right\rfloor-\left\lfloor\frac{nj}{p-1}\right\rfloor$.
Applying \eqref{prod-1} (with $x=\left\langle\frac{1}{2}-\frac{nj}{p-1}\right\rangle$, and $m=n$) we obtain
\begin{align}\label{eq-1001}
\prod_{h=0}^{n-1}\Gamma_p\left(\left\langle\frac{1+2h}{2n}-\frac{j}{p-1}\right\rangle\right)
=\frac{\overline{\omega}^{nj}(n)}{\varphi(n)}\cdot
\Gamma_p\left(\left\langle\frac{1}{2}-\frac{nj}{p-1}\right\rangle\right)\prod_{h=1}^{n-1}\Gamma_p\left(\frac{h}{n}\right).
\end{align}
Also, \eqref{gamma-prod-1} yields
\begin{align}\label{eq-1002}
\omega^{nj}(n)\Gamma_p\left(\left\langle\frac{nj}{p-1}\right\rangle\right)\prod_{h=1}^{n-1}\Gamma_p\left(\frac{h}{n}\right)
=\prod_{h=0}^{n-1}\Gamma_p\left(\left\langle\frac{h}{n}+\frac{j}{p-1}\right\rangle\right).
\end{align}
By Lemma \ref{new-lemma} we have
\begin{align}\label{eq-1005}
\ell_j=\frac{1}{2}-\sum_{h=0}^{n-1}\left\{\left\lfloor\frac{1+2h}{2n}-\frac{j}{p-1}\right\rfloor+
\left\lfloor\frac{h}{n}+\frac{j}{p-1}\right\rfloor\right\}
\end{align}
Putting \eqref{eq-1001}, \eqref{eq-1002}, and \eqref{eq-1005} in \eqref{eq-1000} we obtain
\begin{align}\label{eq-1006}
\frac{B_n(t)}{1-p}=\pi^{\frac{(p-1)}{2}}\varphi(n)\prod_{h=0}^{n-1}\frac{\Gamma_p(\frac{1+2h}{2n})}{\Gamma_p(\frac{h}{n})}~{_n{G}_n}(t).
\end{align}
Using \eqref{eq-1001} with $j=0$ we obtain
$\displaystyle\prod_{h=0}^{n-1}\frac{\Gamma_p(\frac{1+2h}{2n})}{\Gamma_p(\frac{h}{n})}=\varphi(n)\Gamma_p\left(\frac{1}{2}\right)$.
Using this, and Gross-Koblitz formula in \eqref{eq-1006} we obtain
\begin{align*}
B_n(t)=(p-1)g(\varphi)~
{_nG_n}(t).
\end{align*}
This completes the proof.
\end{proof}
\begin{proof}[Proof of Theorem \ref{special-value-1}]
Let $B_n(t)=\displaystyle\sum_{\chi\in\widehat{\mathbb{F}_p^\times}}g(\varphi\chi^n)g(\overline{\chi}^n)\overline{\chi}((-1)^nt)$. Applying Proposition \ref{proposition-2} on the above sum we obtain
\begin{align}\label{proof-eq-1}
B_n(t)=(p-1)g(\varphi)~
{_nG_n}(t).
\end{align}
Let $t^\frac{(p-1)}{d}\equiv1\pmod{p}$. Then applying Proposition \ref{proposition-root} (with $b=t$) we obtain that 
$t$ is $n$-th power residue modulo $p$. Using this information and Proposition \ref{proposition-1} we obtain that
\begin{align}\label{proof-eq-2}
B_n(t)=(p-1)g(\varphi)\sum_{\substack{a\in\mathbb{F}_p\\ a^n\equiv t\pmod{p}}}\varphi(a)\varphi(a-1).
\end{align}
Combining \eqref{proof-eq-1}, and \eqref{proof-eq-2} we complete the proof of the first part. Similarly, we prove the second part.
\end{proof}
\begin{proof}[Proof of Corollary \ref{cor-8}]
By Proposition \ref{proposition-root}, and Theorem \ref{special-value-1} the result follows.
\end{proof}
\begin{proof}[Proof of Theorem \ref{analogue-1}]
If $p\equiv1\pmod{3}$ then we have $\gcd{(3,p-1)}=3$. Now,  for $t\in\mathbb{F}_p^{\times}-\{1\}$ let us assume that 
${_3G_3}(t)=0$. Since $g$ is a primitive root modulo ${p}$, so we write $t=g^i$ for some integer $i$. If $\gcd{(i,3)}=3$ then $i=3k$ for some integer $k$. This yields  
$$t^{\frac{p-1}{3}}=g^{\frac{3k(p-1)}{3}}\equiv1\pmod{p}.$$ If we use Theorem \ref{special-value-1} for $n=3$ then we obtain
\begin{align}\label{proof-eq-4}
{_3G_3}(t)=\sum_{\substack{a\in\mathbb{F}_p\\a^3\equiv t\pmod{p}}}\varphi(a(a-1)).
\end{align}
Since $y^3\equiv t\pmod{p}$ has 3 roots as $p\equiv1\pmod{3}$, so the R.H.S. of \eqref{proof-eq-4} cannot be equal to zero. However, this is a contradiction to the fact that ${_3G_3}(t)=0$. Therefore, we have $\gcd{(i,3)}=1$. Conversely, suppose that $\gcd{(i,3)}=1$. Then
$$t^{\frac{p-1}{3}}=g^{\frac{i(p-1)}{3}}\not\equiv1\pmod{p}.$$ Again, if we use Theorem \ref{special-value-1} for $n=3$ then we have
${_3G_3}(t)=0$.
Now, if $t=1$ then $y^3\equiv1\pmod{p}$ has three roots in $\mathbb{F}_p$. Let $a\neq1$ be a solution of $y^3\equiv1\pmod{p}$ in $\mathbb{F}_p^{\times}$. Then the complete list of solutions of this congruence are 1, $a$, $a^2=a^{-1}$, where $a^{-1}$ denotes the inverse of $a$ in $\mathbb{F}_p^\times$. Using this information and Theorem \ref{special-value-1} (with $n=3$, and $t=1$) we have
\begin{align}
{_3G_3}(1)=\varphi(a)\varphi(a-1)+\varphi(a^{-1})\varphi(a^{-1}-1).
\end{align}
Now, ${_3G_3}(1)=0$ if and only if $\varphi(a)\varphi(a-1)+\varphi(a^{-1})\varphi(a^{-1}-1)=0$. This is possible if and only if
$1+\varphi(-a)=0$. This is equivalent to $$1+\varphi(-1)\varphi(a^{-1})=1+\varphi(-1)\varphi(a^{2})=1+\varphi(-1)=0.$$ This is true if and only if $p\equiv3\pmod{4}$. As $p\equiv1\pmod{3}$ it follows that ${_3G_3}(1)=0$ if and only if $p\equiv7\pmod{12}$.\\
To prove the second part of the theorem let $p\not\equiv1\pmod{3}$. Then $\gcd{(3,p-1)}=1$, and for $t\in\mathbb{F}_p^{\times}$ it is well known that $t^{p-1}\equiv1\pmod{p}$. For $t\neq0,1$ applying Theorem \ref{special-value-1} (with $n=3$) and Proposition \ref{proposition-root} we obtain 
\begin{align}\label{proof-eq-5}
{_3G_3}(t)=\varphi(a(a-1)),
\end{align}
where $a^3\equiv t\pmod{p}$.
The R.H.S. of \eqref{proof-eq-5} can take only values $\pm1$. Thus if $t\neq0,1$ then ${_3G_3}(t)\neq0$. Now, if $t=1$ then the only solution of $y^3\equiv1\pmod{p}$ is 1. Using this information and Theorem \ref{special-value-1} we obtain
that ${_3G_3}(1)=0$. This completes the proof.
\end{proof}
\begin{proof}[Proof of Corollary \ref{cor-1}]
From the first and second parts of Theorem \ref{analogue-1} we  obtain that ${_3G_3}(1)=0$ if $p\equiv5,7,11\pmod{12}$. Now, if $p\equiv1\pmod{12}$ then the congruence $y^3\equiv1\pmod{p}$ has three solutions. Therefore, Theorem \ref{special-value-1} gives ${_3G_3}(1)$ is either equal to 2 or $-2$. This completes the proof.
\end{proof}
\begin{proof}[Proof of Corollary \ref{cor-10}]
The first part of the corollary follows easily from Theorem \ref{special-value-1}. Now, if $p\equiv2\pmod{3}$ then 
$\gcd{(3,p-1)=1}$ and $t^{p-1}\equiv1\pmod{p}$ for all $t$ such that $\gcd{(t,p)}=1$. Using this and Proposition \ref{proposition-root} we obtain that the congruence $y^3\equiv t\pmod{p}$ has a unique solution. To be specific it can be easily shown that the congruence has the unique solution $y\equiv t^{\frac{2p-1}{3}}\pmod{p}$. Using this fact and
Theorem \ref{special-value-1} we conclude the result.
\end{proof}
\begin{proof}[Proof of Corollary \ref{cor-2}]
By Theorem \ref{special-value-1} we have
\begin{align}\label{eq-main}
{_nG_n}(t)=\sum_{\substack{a\in\mathbb{F}_p\\
a^n\equiv t\pmod{p}}}\varphi(a)\varphi(a-1).
\end{align}
Now, if $n$ is even then $d$ is even. Let $t\neq1$. By Proposition \ref{proposition-root} we know that the congruence 
$y^n\equiv t\pmod{p}$ has $d$ solutions. If $a_1,a_2,\ldots,a_d$ are the modulo $p$ solutions of the congruence $y^n\equiv t\pmod{p}$ such that $\displaystyle\sum_{i=1}^{d}\varphi(a_i(a_i-1))=0$ then \eqref{eq-main} gives that ${_nG_n}(t)=0$. Let $t=1$ and $a_1,a_2,\ldots,a_{d-1}$ different from $1$ are the solutions of $y^n\equiv 1\pmod{p}$ modulo $p$. If possible let 
${_nG_n}(1)=0$. Then \eqref{eq-main} gives $\displaystyle\sum_{i=1}^{d-1}\varphi(a_i(a_i-1))=0$, which is not possible as $d$ is even, so ${_nG_n}(1)\neq0$.\\
To prove the second part let $n$ be odd. Then $d$ is odd. If $t\neq1$ then using Proposition \ref{proposition-root} we obtain that the congruence $y^n\equiv t\pmod{p}$ has $d$ solutions. If $a_1,a_2,\ldots,a_d$ are the solutions of the congruence 
$y^n\equiv t\pmod{p}$ then \eqref{eq-main} gives ${_nG_n}(t)=\displaystyle\sum_{i=1}^{d}\varphi(a_i(a_i-1)),$ which cannot be equal to zero as $d$ is odd. Similarly, we settle the case for $t=1$. 

\end{proof}
\begin{proof}[Proof of Corollary \ref{cor-6}]
If $\gcd{(n,p(p-1))}=1$ then it follows from Proposition \ref{proposition-root} that the congruence $$y^n\equiv t\pmod{p}$$ has a unique solution for each $t$ such that $\gcd{(t,p)}=1$. Now, if $t=1$ then 1 is the unique solution of the congruence $y^n\equiv 1\pmod{p}$. Using this information in Theorem \ref{special-value-1} we have ${_nG_n}(1)=0$. Let $t\neq1$ and 
$y\equiv a\pmod{p}$ be the unique solution of $y^n\equiv t\pmod{p}$. Then Theorem \ref{special-value-1} yields
${_nG_n}(t)=\varphi(a(a-1))$, which cannot be zero. This completes the proof.
\end{proof}
\begin{proof}[Proof of Corollary \ref{cor-7}]
If $n$ is even and $p\equiv3\pmod{4}$ then $\gcd{(n,p-1)}=2k$ for some odd integer $k$. This gives $(-1)^{\frac{p-1}{2k}}\equiv-1\pmod{p}$. Then it follows from Theorem \ref{special-value-1} that ${_nG_n}(-1)=0$. This completes the proof.
\end{proof}
\begin{proof}[Proof of Corollary \ref{cor-3}]
Let $n=p-1$ and $t\in\mathbb{F}_p-\{0,1\}$. By Theorem \ref{special-value-1} it follows that
${_{p-1}G_{p-1}(t)}=0$.

Now, we investigate the case when $t=1$. The fact that each element of $\mathbb{F}_p^{\times}$ satisfies the congruence $y^{p-1}\equiv1\pmod{p}$ is well known. Then Theorem \ref{special-value-1}, and \eqref{rel-1} yield
$
{_{p-1}G_{p-1}(1)}=\displaystyle\sum_{a\in\mathbb{F}_p^{\times}}\varphi(a)\varphi(a-1)=\varphi(-1)J(\varphi, \varphi)=-1$.
\end{proof}
We now provide two propositions. These propositions are used to examine the values of the function ${_n\widetilde{G}_n}(t)$.
\begin{proposition}\label{proposition-3}
Let $n\geq3$ be an integer and $p$ be a prime such that $p\nmid n(n-1)$. Let $t\in\mathbb{F}_p^{\times}$ and 
 $f_t(y)=y^{n}-y^{n-1}+\frac{(n-1)^{n-1}}{n^n}t\in\mathbb{F}_p[y]$ be a polynomial in $y$. Let $\alpha=\frac{4(1-n)^{n-1}}{n^n}$, and
$\beta_n(t)=\left\{\begin{array}{ll}
1, & \hbox{if $n$ is odd;}\\
1-(p-1)\varphi((1-n) t), & \hbox{if $n$ is even.}
\end{array}
\right.$

Let $A_n(t)=\displaystyle\sum_{\chi\in\widehat{\mathbb{F}_p^\times}}g(\varphi\chi^{n-1})g(\overline{\chi}^n)g(\overline{\chi})
g(\chi^2)\overline{\chi}(\alpha t)$. Then
\begin{align}
A_n(t)=p(p-1)g(\varphi)\displaystyle\sum_{\substack{a\in\mathbb{F}_p\\ f_t(a)\equiv0\pmod{p}}}\varphi(a(a-1))+(p-1)g(\varphi)\beta_n(t).\notag
\end{align}
\end{proposition}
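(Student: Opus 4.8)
The plan is to collapse the four Gauss sums in each summand down to a single factor $g(\varphi)$ times a Jacobi sum, and then to read off the polynomial $f_t$ from an orthogonality relation. First I would apply the Hasse--Davenport relation \eqref{hd} with $\psi=\varphi$ and $m=2$ to write $g(\chi^2)=\chi^2(2)g(\chi)g(\varphi\chi)/g(\varphi)$, trading the awkward factor $g(\chi^2)$ for $g(\chi)g(\varphi\chi)$ at the cost of a $g(\varphi)^{-1}$. This creates the factor $g(\overline{\chi})g(\chi)$, which \eqref{inverse} turns into $p\chi(-1)$ for $\chi\neq\varepsilon$, producing the first power of $p$. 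The three surviving sums $g(\varphi\chi^{n-1})g(\overline{\chi}^n)g(\varphi\chi)$ have trivial product, so applying \eqref{gauss-jacobi} to the first pair and then \eqref{inverse} to the inverse pair $g(\varphi\overline{\chi})g(\varphi\chi)$ yields $p\,\varphi(-1)\chi(-1)\,J(\varphi\chi^{n-1},\overline{\chi}^n)$, a second power of $p$. Writing $c=(n-1)^{n-1}/n^n$, using $g(\varphi)^2=p\varphi(-1)$, and noting $\chi^2(2)\overline{\chi}(\alpha t)=\overline{\chi}(\alpha t/4)=\overline{\chi}((-1)^{n-1}ct)$, the whole summand collapses, for $\chi\neq\varepsilon,\varphi$, to the clean expression $p\,g(\varphi)\,J(\varphi\chi^{n-1},\overline{\chi}^n)\,\overline{\chi}((-1)^{n-1}ct)$.

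Next I would sum this clean expression over all $\chi$, expand the Jacobi sum by its definition \eqref{jacobi} as $\sum_y\varphi(y)\chi^{n-1}(y)\overline{\chi}^n(1-y)$, and interchange summations so that the inner sum is $\sum_\chi\chi\bigl(y^{n-1}/((1-y)^n(-1)^{n-1}ct)\bigr)$. Orthogonality \eqref{orthogonal-1} forces this to vanish unless $y^{n-1}=(-1)^{n-1}ct(1-y)^n$ with $y\neq0,1$, leaving $p(p-1)g(\varphi)\sum\varphi(y)$ over that locus. The key substitution is the involution $b=y/(y-1)$: a direct computation gives $b^{n-1}(1-b)=(-1)^{n-1}y^{n-1}/(1-y)^n$, which equals $ct$ on the locus, i.e. $f_t(b)\equiv0$, while simultaneously $\varphi(b(b-1))=\varphi(y/(y-1)^2)=\varphi(y)$. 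Hence the main term is exactly $p(p-1)g(\varphi)\sum_{f_t(b)\equiv0}\varphi(b(b-1))$, the first term of the asserted formula.

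It remains to account for the difference between the clean expression and the true summand at the exceptional characters $\chi=\varepsilon$ and $\chi=\varphi$, where the $\delta$-corrections in \eqref{inverse} and \eqref{gauss-jacobi} are concentrated. I would compute $\Delta_\chi:=(\text{true summand})-(\text{clean value})$ directly from $g(\varepsilon)=-1$ and $g(\varphi)^2=p\varphi(-1)$. At $\chi=\varepsilon$ the true summand is $-g(\varphi)$ while the clean value is $p\,g(\varphi)J(\varphi,\varepsilon)=-p\,g(\varphi)$, so $\Delta_\varepsilon=(p-1)g(\varphi)$ for every $n$; this supplies the constant $1$ in $\beta_n(t)$. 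At $\chi=\varphi$ the parity of $n$ enters through $\varphi^n$: for $n$ odd, $\varphi^n=\varphi$, and $J(\varphi,\varphi)=-\varphi(-1)$ together with $\alpha=4c$ makes true and clean values coincide, so $\Delta_\varphi=0$ and $\beta_n(t)=1$. For $n$ even, $\varphi^n=\varepsilon$, $J(\varepsilon,\varepsilon)=p-2$, and $\alpha=-4c$, giving $\Delta_\varphi=-g(\varphi)\varphi(\alpha t)-p(p-2)\varphi(-1)g(\varphi)\varphi(ct)$.

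I expect the main obstacle to be this last parity-dependent bookkeeping: one must check that $\Delta_\varepsilon+\Delta_\varphi$ equals $(p-1)g(\varphi)\beta_n(t)$, which (after dividing by $-g(\varphi)$, substituting $\varphi(\alpha t)=\varphi(-1)\varphi(ct)$, and using $(p-1)^2=1+p(p-2)$) reduces to the single multiplicative identity $\varphi(-c)=\varphi(1-n)$. For even $n$ this follows from $\varphi(n^n)=1$ and $\varphi((n-1)^{n-1})=\varphi(n-1)$, whence $\varphi(-c)=\varphi(-1)\varphi(n-1)=\varphi(1-n)$. Everything else is routine character algebra; the only genuine care needed is in tracking the two exceptional characters and the powers of $p$ and $g(\varphi)$, so that the two $O(p^2)$ contributions at $\chi=\varphi$ reassemble correctly into the term $(p-1)\varphi((1-n)t)$ of $\beta_n(t)$.
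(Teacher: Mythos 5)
Your proposal is correct and follows essentially the same route as the paper: both apply the Hasse--Davenport relation \eqref{hd} with $m=2$ (you to $g(\chi^2)$, the paper to $g(\varphi\overline{\chi})g(\overline{\chi})$ after multiplying and dividing by $g(\varphi\overline{\chi})$), reduce via \eqref{gauss-jacobi} and \eqref{inverse} to a sum of Jacobi sums $J(\varphi\chi^{n-1},\overline{\chi}^n)$ weighted by $\overline{\chi}\bigl((-1)^{n-1}ct\bigr)$, and finish with orthogonality \eqref{orthogonal-1}, your substitution $b=y/(y-1)$ being exactly what the paper's binomial identity \eqref{rel-2} accomplishes implicitly. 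Your explicit computation of the exceptional-character corrections $\Delta_\varepsilon$ and $\Delta_\varphi$, including the verification $\varphi(-c)=\varphi(1-n)$ for even $n$, is a more transparent rendering of the step the paper compresses into a single appeal to \eqref{gauss-jacobi}, \eqref{inverse}, and \eqref{rel-1}.
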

\begin{proof}
Multiplying both numerator and denominator by $g(\varphi\overline{\chi})$ we have
\begin{align}\label{eq-1}
A_t&=\sum_{\chi\in\widehat{\mathbb{F}_p^\times}}\frac{g(\varphi\chi^{n-1})g(\overline{\chi}^n)}{g(\varphi\overline{\chi})}
g(\varphi\overline{\chi})g(\overline{\chi})g(\chi^2)\overline{\chi}(\alpha t).
\end{align}
Applying \eqref{hd} (with $m=2$) we have
$g(\varphi\overline{\chi})g(\overline{\chi})=g(\overline{\chi}^2)g(\varphi)\chi(4)$. Substituting this in \eqref{eq-1} and then using \eqref{gauss-jacobi}, \eqref{inverse}, and \eqref{rel-1} we obtain
\begin{align}
A_n(t)&=pg(\varphi)\sum_{\chi\in\widehat{\mathbb{F}_p^\times}}J(\varphi\chi^{n-1},\overline{\chi}^n)
\overline{\chi}\left(\frac{\alpha t}{4}\right)
+(p-1)g(\varphi)\beta_n(t).\notag
\end{align}
By \eqref{binomial}, and \eqref{rel-2} we re-write $A_n(t)$ as
\begin{align}\label{eq-4}
A_n(t)&=p^2g(\varphi)\sum_{\chi\in\widehat{\mathbb{F}_p^\times}}{\varphi\chi^{n-1}\choose\varphi\overline{\chi}}\overline{\chi}
\left(\frac{(-1)^{n}\alpha t}{4}\right)
+(p-1)g(\varphi)\beta_n(t)\notag\\
&=pg(\varphi)\sum_{1\neq y\in\mathbb{F}_p}\varphi\left(\frac{y}{y-1}\right)
\sum_{\chi\in\widehat{\mathbb{F}_p}^\times}\chi\left(\frac{4y^{n-1}(1-y)}{(-1)^{n-1}\alpha t}\right)+(p-1)g(\varphi)
\beta_n(t).
\end{align} 
By \eqref{orthogonal-1} we obtain that the second sum present on the R.H.S. of \eqref{eq-4} is non zero only if $y^n-y^{n-1}+\frac{(n-1)^{n-1}}{n^n}t\equiv0\pmod{p}$ admits a solution in $\mathbb{F}_p$. Using this information we have 
\begin{align}
A_n(t)=p(p-1)g(\varphi)\displaystyle\sum_{\substack{a\in\mathbb{F}_p\\ f_t(a)\equiv0\pmod{p}}}
\varphi(a(a-1))+(p-1)g(\varphi)\beta_n(t)\notag
\end{align}
\end{proof}
\begin{proposition}\label{proposition-4}
Let $n\geq3$ and $p\nmid n(n-1)$ be an odd prime. If $t\in\mathbb{F}_p^{\times}$ and
$\alpha=\frac{4(1-n)^{n-1}}{n^n}$ then let
 $A_n(t)=\displaystyle\sum_{\chi\in\widehat{\mathbb{F}_p^\times}}g(\varphi\chi^{n-1})g(\overline{\chi}^n)g(\overline{\chi})
g(\chi^2)\overline{\chi}\left(\alpha t\right).$
 Then we have
$A_n(t)=(p-1)g(\varphi)(1+p\cdot {_n\widetilde{G}_n}(t))$.
\end{proposition}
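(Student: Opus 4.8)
The plan is to mirror the proof of Proposition \ref{proposition-2}, but first to peel off the quadratic factor $g(\chi^2)$ so that what remains is a sum of Gauss sums whose Gross--Koblitz expansion matches the $2n$ gamma factors in the definition of ${_n\widetilde{G}_n}(t)$ exactly. First I would rewrite $g(\chi^2)$ using the Hasse--Davenport relation \eqref{hd} with $m=2$ and $\psi=\varphi$, which gives $g(\chi)g(\varphi\chi)=g(\chi^2)\chi^{-2}(2)g(\varphi)$, hence $g(\chi^2)=g(\chi)g(\varphi\chi)\chi^2(2)/g(\varphi)$. Substituting into $A_n(t)$ and grouping $g(\overline{\chi})g(\chi)$ together, I would apply \eqref{inverse} to replace $g(\chi)g(\overline{\chi})$ by $p\chi(-1)-(p-1)\delta(\chi)$. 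This splits $A_n(t)$ into a $\delta$-piece supported only at $\chi=\varepsilon$ and a main piece carrying the factor $\tfrac{p}{g(\varphi)}$. Evaluating at $\chi=\varepsilon$, where $g(\varphi\chi^{n-1})=g(\varphi\chi)=g(\varphi)$ and $g(\overline{\chi}^n)=-1$, the $\delta$-piece equals $-\tfrac{p-1}{g(\varphi)}\cdot(-g(\varphi)^2)=(p-1)g(\varphi)$, which will become the summand $1$ in $1+p\cdot{_n\widetilde{G}_n}(t)$.

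Collecting characters and using $\chi(-1)\chi^2(2)=\chi(-4)$ together with $-\tfrac{\alpha t}{4}=-\tfrac{(1-n)^{n-1}}{n^n}t$, the main piece is $\tfrac{p}{g(\varphi)}S$, where
\[
S=\sum_{\chi\in\widehat{\mathbb{F}_p^\times}}g(\varphi\chi^{n-1})g(\overline{\chi}^n)g(\varphi\chi)\,\overline{\chi}\!\left(-\tfrac{\alpha t}{4}\right).
\]
The heart of the argument is to show $S=(p-1)g(\varphi)^2\,{_n\widetilde{G}_n}(t)$, which I would carry out exactly as in Proposition \ref{proposition-2}. Writing $\chi=\omega^j$ and applying Gross--Koblitz (Theorem \ref{gross-koblitz}) to the three Gauss sums produces, for each $j$, a power of $\pi$ times $\Gamma_p(\langle\frac12-\frac{(n-1)j}{p-1}\rangle)\,\Gamma_p(\langle\frac{nj}{p-1}\rangle)\,\Gamma_p(\langle\frac12-\frac{j}{p-1}\rangle)$, the last factor coming from $g(\varphi\chi)$ since $\langle-\frac12-\frac{j}{p-1}\rangle=\langle\frac12-\frac{j}{p-1}\rangle$. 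I would then expand the first factor by \eqref{prod-1} with $m=n-1$ into the $n-1$ numerator gammas $\prod_{h=0}^{n-2}\Gamma_p(\langle\frac{1+2h}{2(n-1)}-\frac{j}{p-1}\rangle)$, and the second by \eqref{gamma-prod-1} with $t=n$ into the $n$ denominator gammas $\prod_{h=0}^{n-1}\Gamma_p(\langle\frac{h}{n}+\frac{j}{p-1}\rangle)$.

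Together with the surviving $\Gamma_p(\langle\frac12-\frac{j}{p-1}\rangle)$, these are precisely the $2n$ gamma factors of ${_n\widetilde{G}_n}(t)$, with no leftover factor to absorb. The accompanying powers of $\pi$ are converted into the $(-p)$-powers of the definition using the first two identities of Lemma \ref{new-lemma} (with $m=n$ and $m=n-1$), and the residual constants ($\prod_{h=1}^{n-1}\Gamma_p(\frac hn)$, $\prod_{h=1}^{n-2}\Gamma_p(\frac{h}{n-1})$, the Teichm\"uller factors $\omega(n),\omega(n-1)$, and the sign $(-1)^{jn}$) are matched by evaluating the product formulas at $j=0$, just as $\prod_h\Gamma_p(\frac{1+2h}{2n})/\Gamma_p(\frac hn)$ was evaluated in Proposition \ref{proposition-2}. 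The extra numerator parameter $\frac12$ contributes one further factor $\pi^{(p-1)/2}\Gamma_p(\frac12)=-g(\varphi)$ beyond what appeared there, which is the source of the second power of $g(\varphi)$ in $S$. Combining the two pieces yields $A_n(t)=(p-1)g(\varphi)+\tfrac{p}{g(\varphi)}\cdot(p-1)g(\varphi)^2\,{_n\widetilde{G}_n}(t)=(p-1)g(\varphi)\bigl(1+p\cdot{_n\widetilde{G}_n}(t)\bigr)$.

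I expect the main obstacle to be the constant-and-sign bookkeeping in the evaluation of $S$: one must check that the Teichm\"uller characters produced by \eqref{prod-1} and \eqref{gamma-prod-1} combine with the argument $-\frac{\alpha t}{4}=-\frac{(1-n)^{n-1}}{n^n}t$ to reproduce exactly $(-1)^{jn}\overline{\omega}^j(t)$ together with the normalising constant $\frac{-1}{p-1}$ of the definition, and that the $\pi$-exponents assemble into the floor-function exponents of $(-p)$. I would avoid an alternative that keeps $g(\chi^2)$ and instead applies the duplication formula \eqref{gamma-prod-2} with $t=2$ (matching the $\pi$-power via the third identity of Lemma \ref{new-lemma}) followed by the reflection relation $\Gamma_p(x)\Gamma_p(1-x)=\pm1$ to absorb a superfluous gamma factor: it reaches the same identity but entangles the factor $p$ with the powers of $\pi$, whereas the Hasse--Davenport step above produces the factor $p$ cleanly and transparently from \eqref{inverse}.
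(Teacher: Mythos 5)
Your proof is correct, but it takes a genuinely different route from the paper's, and in fact the ``alternative'' you explicitly chose to avoid in your last paragraph is precisely the paper's own proof. The paper keeps all four Gauss sums and applies Gross--Koblitz to each of them directly, so that $g(\chi^2)$ contributes $\Gamma_p\left(\left\langle\frac{-2j}{p-1}\right\rangle\right)$; it then works entirely at the gamma level, using \eqref{gamma-prod-2} with $t=2$ to split that factor into $\Gamma_p\left(\left\langle\frac{1}{2}-\frac{j}{p-1}\right\rangle\right)\Gamma_p\left(\left\langle1-\frac{j}{p-1}\right\rangle\right)$, killing the superfluous second factor against the $\Gamma_p\left(\left\langle\frac{j}{p-1}\right\rangle\right)$ coming from $g(\overline{\chi})$ via the reflection identity \eqref{eq-12} (valid only for $1\leq j\leq p-2$, which forces the $j=0$ term to be carried separately as the constant at the end of \eqref{eq-11}), and extracting the factor $p$ from the $\pi$-exponent bookkeeping via the third identity of Lemma \ref{new-lemma}. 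You instead do the surgery at the Gauss-sum level before invoking Gross--Koblitz: Hasse--Davenport \eqref{hd} with $m=2$, $\psi=\varphi$ trades $g(\chi^2)$ for $g(\chi)g(\varphi\chi)$, and \eqref{inverse} applied to $g(\chi)g(\overline{\chi})$ produces the factor $p$ and the additive constant $(p-1)g(\varphi)$ in one clean stroke, after which the sum $S$ is a verbatim replay of Proposition \ref{proposition-2}, uniform in $j$ (no $j=0$ case split) and needing only \eqref{prod-1}, \eqref{gamma-prod-1} and the first two identities of Lemma \ref{new-lemma}; I checked your Teichm\"uller bookkeeping ($\omega^j((n-1)^{n-1})\,\overline{\omega}^j(n^n)$ from the product formulas cancels against $\overline{\omega}^j\left(-\frac{\alpha t}{4}\right)$ up to exactly $(-1)^{jn}\overline{\omega}^j(t)$) and your constants ($g(\varphi)^2=-p\,\Gamma_p(\frac12)^2$ matches the normalization), and both work out. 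Your route buys a cleaner separation of the ``$1$'' and the ``$p$'' and wholesale reuse of Proposition \ref{proposition-2}; it is also the natural mirror of the technique the paper itself uses on the character-sum side in Proposition \ref{proposition-3}, where the same Hasse--Davenport-plus-\eqref{inverse} step appears. The paper's route avoids introducing $g(\varphi\chi)$ but pays for it with the extra duplication and reflection identities and the separate $j=0$ term; your worry that it ``entangles the factor $p$ with the powers of $\pi$'' is an accurate description of what the paper's proof has to manage.
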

\begin{proof}
Replacing $\chi$ by $\omega^j$ and then applying Gross-Koblitz formula, \eqref{prod-1}, \eqref{gamma-prod-1}, and \eqref{gamma-prod-2} similarly as shown in the proof of Proposition \ref{proposition-2}  we deduce that 
\begin{align}\label{eq-11}
\frac{A_n(t)}{\varphi(n-1)}&=\sum_{j=1}^{p-2}\pi^{(p-1)\ell_j}~\overline{\omega}^j((-1)^{n-1}t)
\prod_{h=0}^{n-2}\frac{\Gamma_p\left(\left\langle\frac{1+2h}{2(n-1)}-\frac{j}{p-1}\right\rangle\right)}
{\Gamma_p(\frac{h}{n-1})}\\
&\times\frac{\Gamma_p\left(\left\langle\frac{1}{2}-\frac{j}{p-1}\right\rangle\right)}{\Gamma_p(\frac{1}{2})}
\prod_{h=0}^{n-1}\frac{\Gamma_p\left(\left\langle\frac{h}{n}+\frac{j}{p-1}\right\rangle\right)}{\Gamma_p(\frac{h}{n})}
\notag\\
&\times\Gamma_p\left(\left\langle\frac{j}{p-1}\right\rangle\right)\Gamma_p\left(\left\langle1-\frac{j}{p-1}\right\rangle\right)
+\pi^{\frac{p-1}{2}}\prod_{h=0}^{n-2}
\frac{\Gamma_p\left(\frac{1+2h}{2(n-1)}\right)}{\Gamma_p(\frac{h}{n-1})},\notag
\end{align}
where $\ell_j=\frac{1}{2}-\lfloor\frac{1}{2}-\frac{(n-1)j}{p-1}\rfloor-\lfloor\frac{nj}{p-1}\rfloor-\lfloor\frac{j}{p-1}\rfloor
-\lfloor\frac{-2j}{p-1}\rfloor$.
If $1\leq j\leq p-2$ then Gross-Koblitz formula, and \eqref{inverse} give
\begin{align}\label{eq-12}
\Gamma_p\left(\left\langle\frac{j}{p-1}\right\rangle\right)\Gamma_p\left(\left\langle1-\frac{j}{p-1}\right\rangle\right)
=-\omega^j(-1).
\end{align}
Also, Gross-Koblitz gives 
\begin{align}\label{last}
g(\varphi)=-\pi^{(p-1)/2}~\Gamma_p(1/2).
\end{align}
\eqref{prod-1} yields $\displaystyle\prod_{h=0}^{n-2}\frac{\Gamma_p(\frac{1+2h}{2(n-1)})}{\Gamma_p(\frac{h}{n-1})}=\varphi(n-1)
\Gamma_p(1/2)$. Substituting this identity along with \eqref{eq-12}, and \eqref{last} into \eqref{eq-11} and finally using Lemma \ref{new-lemma} in the expression of $\ell_j$ we deduce the required identity.
\end{proof}
\begin{proof}[Proof of Theorem \ref{special-value-2}]
Let
$A_n(t)=\displaystyle\sum_{\chi\in\widehat{\mathbb{F}_p^\times}}g(\varphi\chi^{n-1})g(\overline{\chi}^n)g(\overline{\chi})
g(\chi^2)\overline{\chi}\left(\alpha t\right),$ where\\  $\alpha=\frac{4(1-n)^{n-1}}{n^n}$. Now, applying 
Proposition \ref{proposition-3}, and Proposition \ref{proposition-4} on $A_n(t)$ and then combining both the expressions we obtain the result. 
\end{proof}

\begin{proof}[Proof of Corollary \ref{SV-2}]
Let $n=3$ and $f_t(y)=27y^3-27y^2+4t$. Then applying Theorem \ref{special-value-2} for $n=3$ and $p>3$ we obtain 
\begin{align}\label{eqn-1}
{_3\widetilde{G}_3}(t)=\sum_{\substack{a\in\mathbb{F}_p\\f_t(a)\equiv0\pmod{p}}}\varphi(a(a-1)).
\end{align}
For $p>3$ we know that if $t=1$ then 
the roots of the polynomial $27y^3-27y^2+4$ are $\frac{2}{3}$ with multiplicity two, and $\frac{-1}{3}$ with one.
Therefore, if $t=1$ then using this information in \eqref{eqn-1} we obtain 
\begin{align}
{_3\widetilde{G}_3}(1)=1+\varphi(-2).\notag
\end{align} 
This proves the first part. Similarly, we prove the other parts  of the corollary.
\end{proof}
\begin{proof}[Proof of Corollary \ref{cor-4}]
If $n$ is even and $t\in\mathbb{F}_p^\times$ then Theorem \ref{special-value-2} yields
\begin{align}\label{eq-1-cor-4}
{_n\widetilde{G}_n}(t)=\frac{(1-p)\varphi((1-n)t)}{p}+\sum_{\substack{a\in\mathbb{F}_p\\ f_t(a)\equiv0\pmod{p}}}
\varphi(a(a-1)).
\end{align}
If possible let ${_n\widetilde{G}_n}(t)=0$ then \eqref{eq-1-cor-4} gives
 $(p-1)\varphi(t)\varphi(1-n)\equiv0\pmod{p}$, which is not possible. Hence, if $t\neq0$ then ${_n\widetilde{G}_n}(t)\neq0$. This completes the proof.
\end{proof}


\end{document}